\documentclass{amsart}

\usepackage{amsthm, amsfonts, amssymb, amsmath, latexsym, enumerate, times, graphicx, amsxtra,amscd} 
\usepackage{xypic}
\usepackage[mathscr]{eucal}
\usepackage{xcolor}
\usepackage{a4}
\usepackage[normalem]{ulem}
\usepackage[latin1]{inputenc}
\usepackage{mathrsfs}
\usepackage{mathtools}
\usepackage{letltxmacro}
\usepackage{enumerate}
\usepackage{array}

\usepackage{comment}
\usepackage{todonotes}
\usepackage{tikz-cd}
\usetikzlibrary{arrows.meta,positioning}

\LetLtxMacro\orgvdots\vdots
\LetLtxMacro\orgddots\ddots

\newcounter{rowcntr}[table]
\renewcommand{\therowcntr}{\roman{rowcntr}}

\newcolumntype{N}{>{\refstepcounter{rowcntr}(\therowcntr)}c}

\makeatletter
\DeclareRobustCommand\vdots{%
	\mathpalette\@vdots{}%
}
\newcommand*{\@vdots}[2]{%
	\sbox0{$#1\cdotp\cdotp\cdotp\m@th$}%
	\sbox2{$#1.\m@th$}%
	\vbox{%
		\dimen@=\wd0 %
		\advance\dimen@ -3\ht2 %
		\kern.5\dimen@
		\dimen@=\wd2 %
		\advance\dimen@ -\ht2 %
		\dimen2=\wd0 %
		\advance\dimen2 -\dimen@
		\vbox to \dimen2{%
			\offinterlineskip
			\copy2 \vfill\copy2 \vfill\copy2 %
		}%
	}%
}
\DeclareRobustCommand\ddots{%
	\mathinner{%
		\mathpalette\@ddots{}%
		\mkern\thinmuskip
	}%
}
\newcommand*{\@ddots}[2]{%
	\sbox0{$#1\cdotp\cdotp\cdotp\m@th$}%
	\sbox2{$#1.\m@th$}%
	\vbox{%
		\dimen@=\wd0 %
		\advance\dimen@ -3\ht2 %
		\kern.5\dimen@
		\dimen@=\wd2 %
		\advance\dimen@ -\ht2 %
		\dimen2=\wd0 %
		\advance\dimen2 -\dimen@
		\vbox to \dimen2{%
			\offinterlineskip
			\hbox{$#1\mathpunct{.}\m@th$}%
			\vfill
			\hbox{$#1\mathpunct{\kern\wd2}\mathpunct{.}\m@th$}%
			\vfill
			\hbox{$#1\mathpunct{\kern\wd2}\mathpunct{\kern\wd2}\mathpunct{.}\m@th$}%
		}%
	}%
}
\makeatother

\usepackage{array}
\usepackage{comment}
\usepackage{paralist}
\usepackage{xcolor}

\newtheorem{theorem}{Theorem}[section]
\newtheorem{lemma}[theorem]{Lemma}

\newtheorem{corollary}[theorem]{Corollary}
\newtheorem{proposition}[theorem]{Proposition}

\theoremstyle{definition}

\newtheorem{remark}[theorem]{Remark}

\newcommand{\calB}{{\mathcal B}}

\newcommand{\calH}{{\mathcal H}}
\newcommand{\calI}{{\mathcal I}}
\newcommand{\calL}{{\mathcal L}}
\newcommand{\calM}{{\mathcal M}}
\newcommand{\calP}{{\mathcal P}}
\newcommand{\calO}{{\mathcal O}}
\newcommand{\calS}{{\mathcal S}}
\newcommand{\calT}{{\mathcal T}}

\newcommand{\bbC}{{\mathbb C}}
\newcommand{\bbD}{{\mathbb D}}

\newcommand{\bbP}{{\mathbb P}}

\newcommand{\barM}{\overline{M}}
\newcommand{\barq}{\overline{q}}

\newcommand{\mult}{\operatorname{mult}}

\newcommand{\Sym}{\operatorname{Sym}}
\def\geq{\geqslant}
\def\leq{\leqslant}

\def\ge{\geqslant}

\begin{document}
 
\title[Contact invariants for plane curves in a pencil]{Contact invariants for plane curves in a pencil}
 
\author{Ciro Ciliberto}
\address{Dipartimento di Matematica, Universit\`a di Roma Tor Vergata, Via O. Raimondo 00173 Roma, Italia}
\email{cilibert@axp.mat.uniroma2.it}

\author{Rick Miranda}
\address{Department of Mathematics, Colorado State University, Fort Collins (CO), 80523,USA}
\email{rick.miranda@colostate.edu}
 
\author{Joaquim Ro\'e}
\address{Departament de Matematiques, Universitat Aut\`onoma de Barcelona, 08193 Bellaterra (Barcelona), Catalunya}
\email{joaquim.roe@uab.cat}

\subjclass{Primary 14H50, 14N05, 14N10, 14N15; Secondary 14A25.}
 
\keywords{contact, flex, bitangent, tritangent, flex bitangent, hyperflex}
 
\begin{abstract} 
Let $\calP$ be a general pencil of curves of degree $d$ in the projective plane. 
In this paper we review the computation of the number of curves in $\calP$ that have 
a hyperflex line, a flex bitangent line and a tritangent line. 
Then we focus on the curves in the dual plane 
described by the flex tangents and the bitangents of the curves of $\calP$ 
and the curves in the original plane 
described by the flexes and the points of bitangencies of the curves in $\calP$.  
Some of these curves have been studied already: we mainly focus here on the ones that still have not been treated systematically, and we compute their degree, genus, and singularities. 

\end{abstract}

\maketitle

\tableofcontents
 
\section*{Introduction} 
Consider the projective space $|\mathcal O_{\mathbb P^2}(d)|$
(of dimension $N_d = d(d+3)/2$)
of all plane curves of degree $d$, 
i.e., all homogeneous polynomials in $[x:y:z]$ of degree $d$,
up to multiplication by a non--zero factor.
The $d=1$ case is the \emph{dual plane} $(\bbP^2)^*$ 
parametrizing lines in the plane;
it has dimension two.

Given a line $L$, a plane curve $C$, and a point $p \in C$,
we say that $L$ has \emph{contact to order at least $m$} with $C$ at $p$
if $\mult_p(L,C) \geq m$.
The $m=1$ case is just that $L$ passes through $p$;
if $p$ is a smooth point of $C$, 
the $m=2$ case is that $L$ is the tangent line to $C$ at $p$.

A \emph{contact condition} 
(of orders at least $\underline{m} = (m_1,\ldots,m_k)$)
on a plane curve is the condition that there is a line $L$
and an ordered $k$--tuple of points of $C$ 
(which may not be distinct, e.g., may be infinitely near)
such that the contact orders of $L$ with $C$ at the $i$-th point 
is at least $m_i$, for $1\leq i\leq k$.

We will assume that all $m_i \geq 2$, 
so that we are studying tangency (to possibly high order) conditions.
Form the incidence variety
\[
\calI_{\underline{m},d} =\overline{\{(L,(p_1,\ldots,p_k),C) \;|\; \mult_{p_i}(L,C) \geq m_i\}}
\subset (\bbP^2)^*\times (\bbP^2)^k\times |\mathcal O_{\mathbb P^2}(d)|
\]
where the points $p_1,\ldots,p_k$ are assumed to be distinct;
the closure operation attaches to the boundary the triples 
$(L,(p_1, \dots, p_k),C)$ where some of the points are infinitely near.
We consider the three natural projections
\begin{equation}\label{eq:incidence1}
	\begin{tikzcd}[column sep=large] 
	&\calI_{\underline{m},d}
	\arrow[dl, "\pi_1" swap] 
	\arrow[d, "\pi_2" ] 
	\arrow[dr, "\pi_3"] 
	\\ (\bbP^2)^* 
	& (\bbP^2)^k
	& {|\mathcal O_{\mathbb P^2}(d)|}\cong\bbP^{N_d}  
\end{tikzcd} 
\end{equation}
and compute the dimension of $\calI_{\underline{m},d}$
by projecting onto the first two components. 
If we fix a line $L\in (\bbP^2)^*$, the $k$ points lying on it vary in a $k$-dimensional space,
(namely $L^k$),
and the condition that a curve $C$ has contact $m_i$ at $p_i$ is $m_i$ conditions on the curve.
Hence the fiber over $(L,(p_1,\dots,p_k))$, if it is non-empty, has dimension $N_d-\sum_{i=1}^k m_i$.
Hence  
$$\dim(\calI_{\underline{m},d})= 2+k+N_d -\sum_{i=1}^k m_i=N_d +2-\sum_{i=1}^k (m_i-1).$$

The first set of cases that are of interest classically
are when, for each curve, there are a finite number of lines having the required contact.
This requires $\dim(\calI_{\underline{m},d}) = N_d$,
so that $\sum_{i=1}^k (m_i-1) = 2$.
There are only two cases: $\underline{m} = (2,2)$ or $\underline{m}=(3)$;
these are the bitangents and the flex lines.
The Pl\"ucker formulas tell us that the number of bitangents to a general curve $C$
is $d(d-2)(d^2-9)/2$ and the number of flex lines is $3d(d-2)$.
These are the degrees of the map $\calI_{\underline{m},d} \to \bbP^{N_d}$
in these two cases.

We will say more about these two cases below,
but for now let us move to the next overall case of interest,
when $\calI_{\underline{m},d}$ has dimension one less than $N_d$,
so that in the space of curves, it is one condition for the existence of a line with the required contact.
Here we must have $\sum_{i=1}^k (m_i-1) = 3$, and there are only three cases:
$\underline{m} = (4), (3,2)$, or $(2,2,2)$.

The main goal of this paper is to expose the geometry of the hypersurfaces 
$\calH_{\underline{m},d}=\pi_3(\calI_{\underline{m},d})$
in the space of curves for $\underline{m} = (4), (3,2)$, or $(2,2,2)$, 
collecting, and revisiting in a uniform way,
known results and contributing some new ones, along with a degeneration technique to deal with them, 
that we introduce and use in Section \ref {sec:recursion}.

The degrees of these hypersurfaces are given below:
\begin{equation}\label{degreetable}
\begin{array}{c|c|c}
\underline{m} & \deg(\calH_{\underline{m},d})) & \text{reference} \\
\hline
(4) & 6(d-3)(3d-2) & \text{\cite[\S 400]{Sal59}, \cite[Section 11.3.1]{3264}}, \\ 
& & \text{and Prop. \ref{prop:SC} here} \\
(3,2) & 3(d^2 + 6d - 4)(d - 3)(d - 4) & \text{Prop. 4.1 here} \\
(2,2,2) & (d^2+3d-2)(d-3)(d-4)(d-5) & \text{\cite{O}} \\
\end{array}
\end{equation}
The case of $\underline{m} =(4)$ was classically called a line of \emph{undulation} to the curve;
the modern terminology seems to be in this case that the line is a \emph{hyperflex}.
Its degree, noted above, was computed by Cayley and Salmon (\cite[\S 400]{Sal59}).
A modern proof using Chern classes is given in \cite{3264}, Section 11.3.1; in Section \ref{sec:Chern} we recall this approach and develop it further. 

In the case of $\underline{m} =(3,2)$ we will call such a line a \emph{flex bitangent}; 
again it is one condition for a curve to have one. 
In \cite[\S 400]{Sal59} 
it has been stated that the degree of $\calH_{(3,2),d}$ is $3(d-4)(3d^3+5d^2-32d+18)$. 
We found that this is wrong;
the correct number is in the table above,
 and we compute it in Section \ref {sec:fb} 
 (see also Section \ref {sse:other}). 

In the case of $\underline{m}=(2,2,2)$ 
the line is called a \emph{tritangent} line to the curve.
	The degree of $\calH_{(2,2,2),d}$ is given above, 
and we compute it in Section \ref {sec:recursion}; 
surprisingly, we could not find a classical source for this number, 
which we found only in the informal notes \cite{O} by G.~Oberdieck.
	
\smallskip

A secondary goal of this paper
is to return to the more elementary cases of $\sum_{i=1}^k(m_i-1)=2$
(the flexes and bitangents)
and consider the geometry of the curves
swept out by these special contact points and lines
as the curves move in a general pencil.
For these two cases, a general pencil $\calP$ 
of curves of degree $d$ determines, 
via the incidence variety, plane curves
\[
\calM_{\underline{m},d, \calP} 
= \pi_1(\pi_3^{-1}(\calP))\subset(\bbP^2)^* \,\, \text{and}\,\, \calB_{\underline{m},d, \calP,i}=\pi_{2,i}(\pi_3^{-1}(\calP))\subset \bbP^2,
\]
where $\pi_{2,i}: \calI_{\underline{m},d} \longrightarrow \bbP^2$ is the composition of $\pi_2$ 
with the projection of $(\bbP^2)^k$ to the $i$--th factor,  
for $i=1,\dots,k$. 
For instance, the curve $\calM_{(3),d,\calP}$ 
described by the flexes of the curves in a pencil
has been studied in \cite [\S 11.3.2] {3264} 
and in \cite {Ku}.
For the bitangent curves, a monodromy argument
shows that the two curves $\calB_{(2,2),d, \calP,1}$
and $\calB_{(2,2),d, \calP,2}$ are the same curve,
which we will denote simply by $\calB_{(2,2),d, \calP}$.

In Sections \ref {sec:unif} and \ref {sec:comp} will give numerical and geometric descriptions of some of these curves of contact lines and of contact points, 
based on a unified approach introduced in Section \ref {sec:unif}. 

Classical approaches to contact problems 
were based on more or less ad-hoc methods 
like the \emph{tact invariant} used in \cite{Sal59}. 
A general method to deal with them via Chern class computations 
is nowadays available, 
and can be found in \cite[Section 11.3.1]{3264}, 
where the calculation of the degree of $\calH_{(4),d}$ is given; 
as we said, in Section \ref{sec:Chern} we summarize this approach.
It was our goal to give a complete treatment 
of all of the various cases
for both $\sum_i(m_i-1)=2$ and $\sum_i(m_i-1)=3$,
and hence we give proofs for all the desired statements;
some of this is not original, and 
references are provided of course.


In conclusion, the original results contained in this paper are:\\
\begin{inparaenum}
\item [$\bullet$] the computation of the degree of $\calH_{(3,2),d}$ (see Section \ref {sec:fb});\\ 
\item [$\bullet$]  the computation of the degree and the arithmetic genus of the curve of bitangent points 
 $\calB_{(2,2),d,\calP}\subset \bbP^2$
 described by the tangency points of the bitangent lines to the curves in a general pencil  $\calP$ of curves of degree $d$ (see Proposition \ref {prop:bbt});\\ 
\item [$\bullet$] the computation of the degree and genus 
of $\calM_{(2,2),d, \calP}$,
 i.e., the curve of bitangent lines to curves in a general pencil $\calP$ of curves of degree $d$ (see \S \ref {ssec:bit} and \S \ref {ssec:dbf});\\
\item  [$\bullet$] the study of the singularities of $\calM_{(2,2),d, \calP}$
 (see Section \ref {sec:fb}).
\end{inparaenum}

\smallskip
We work over the complex field.

\section{Setting and Chern class computations}
\label{sec:Chern}

{
\subsection{The main diagram}
Define 
\[
\Psi = \{(L,p) \;|\; p \in L\} \subset (\bbP^2)^*\times \bbP^2
\]
and more generally, for any positive integer $k$, define
\[
\Psi_k = \overline{\{(L,(p_1,\dots,p_k)) \;|\; p_i \in L, p_i\ne p_j \text{ if } i\ne j\}} \subset (\bbP^2)^*\times (\bbP^2)^k
\]
(so $\Psi=\Psi_1$) to be the \emph{incidence varieties} of points on lines in the plane.

For every $\underline{m}$ and $d$, the incidence diagram \eqref{eq:incidence1} factors through $\Psi_k$ as 
\begin{equation*}
	\begin{tikzcd}[column sep=large] 
		&\calI_{\underline{m},d} 
		\arrow[ddr, "\pi_3"] 
		\arrow[ddl, phantom, "\Psi_k"{name=P, description}]
		\arrow[to=P, "\pi_\Psi"]\\
		& 
		\arrow[dl, from=P, "\pi'_1" swap] 
		\arrow[d, from=P, "\pi'_2" ] 
		\\
		(\bbP^2)^* 		& (\bbP^2)^k		& {|\mathcal O_{\mathbb P^2}(d)|}  
	\end{tikzcd} 
\end{equation*}
where $\pi_i=\pi_i'\circ \pi_\Psi$, for $1\leq i\leq 2$.

For every $L\in (\bbP^2)^*$ the fiber of $\pi'_1$ over $L$ is
smooth of dimension $k$, 
isomorphic to $L^k$,
so $\Psi_k$ is smooth and irreducible of dimension $\dim(\Psi_k) = 2+k$ for every positive integer $k$.

Assuming $d+1\ge \sum_{i=1}^k m_i$, 
the fiber of $\pi_\Psi$ over any point of $\Psi_k$ is isomorphic to 
a linear system of dimension $N_d-\sum_{i=1}^k m_i$, 
so $\calI_{\underline{m},d}$ is an irreducible variety of dimension 
$N_d +2-\sum_{i=1}^k (m_i-1)$, 
as stated in the Introduction. 
Therefore, the image $\calH_{\underline{m},d}:=\pi_3(\calI_{\underline{m},d})\subset|\calO_{\bbP^2}(d)|$
is also an irreducible subvariety.
Unless $\underline{m}=(2)$, the morphism
$\pi_3:\calI_{\underline{m},d}\rightarrow\calH_{\underline{m},d}$
is finite 
(note that this statement is false in positive characteristic),
so 
$$\dim (\calH_{\underline{m},d})=N_d +2-\sum_{i=1}^k (m_i-1).$$

When $\calH_{\underline{m},d}$ is a hypersurface, 
its degree counts the intersection points with a general pencil 
$\calP\subset |\calO_{\mathbb P^2}(d)|$.
This leads us to consider the following extended diagram
\begin{equation}\label{eq:incidence_pencil}
	\begin{tikzcd}[column sep=large] 
		&\calI_{\underline{m},d} 
		\arrow[ddr, "\pi_3"] 
		\arrow[ddl, phantom, "\Psi_k"{name=P, description}]
		\arrow[to=P, "\pi_\Psi"] & S 
		\arrow[l,hook,"\iota"]
		\arrow[ddr]
		\arrow[dd, phantom, "\urcorner"{name=intersect, description}, very near start] \\
		& 
		\arrow[dl, from=P, "\pi'_1" swap] 
		\arrow[d, from=P, "\pi'_2" ] 
		\\
		(\bbP^2)^* 		& (\bbP^2)^k		& {|\mathcal O_{\mathbb P^2}(d)|} &
		\mathcal{P} 		\arrow[l,hook]  
	\end{tikzcd} 
\end{equation}
where $S$ is finite if $\sum_{i=1}^k (m_i-1)=3$, 
$1$-dimensional if $\sum_{i=1}^k (m_i-1)=2$, 
and a surface for $\underline{m}=(2)$.
In all other cases it is empty.

\subsection{The bundle of principal parts}

Let us focus for the rest of this section on the $k=1$ case.
}%
The treatment here will closely follow \cite[Section 11.3.2]{3264},
but we include it for a complete treatment of the hypersurface cases,
and we later use the statements involving the flex curve.
Fix $d$ and let us define 
\[
\calL_d = (\pi'_2)^*(\calO_{\bbP^2}(d)),
\]
which is a line bundle on $\Psi$. 
For any line $L$ (considered as a point in $(\bbP^2)^*$),
the restriction of $\calL_d$ to the line $(L,-) = (\pi'_1)^*(L)$ 
is the restriction of $\calO_{\bbP^2}(d)$ to $L$.
The \emph{bundle of principal parts}
\[
\calP^\ell_{\Psi/(\bbP^2)^*}(\calL_d),
\]
for a fixed integer $\ell \geq -1$, is defined as the bundle whose stalk over a point $(L,p) \in \Psi$
is the quotient
\[
\frac{\text{germs of sections of }\calO_L(d)}
{\text{germs of sections vanishing to order at least } \ell+1 \text{ at } p}.
\]
(thus, requiring that a curve $C$ has order of contact at least $m$ with $L$ at $p$ means
that its image on the bundle of principal parts with $\ell=m-1$ vanishes).
We note that $\calP^{-1}_{\Psi/(\bbP^2)^*}(\calL_d) = 0$.

To describe this more precisely/inductively, we have short exact sequences
\[
0 \to \calL_d\otimes \Sym^\ell(\Omega_{\Psi/(\bbP^2)^*})
\to \calP^\ell_{\Psi/(\bbP^2)^*}(\calL_d) \to \calP^{\ell-1}_{\Psi/(\bbP^2)^*}(\calL_d) \to 0
\]
for every $\ell$ (see \cite [Thm. 7.2]{3264}).
This inductively implies that the total Chern polynomial for this bundle is
\begin{equation}\label{totalchernprincipalparts}
c(\calP^\ell_{\Psi/(\bbP^2)^*}(\calL_d))
= \prod_{k=0}^\ell c(\calL_d\otimes \Sym^k(\Omega_{\Psi/(\bbP^2)^*}))
\end{equation}
and so expresses this Chern polynomial in terms of $\calL_d$ and the sheaf of differentials.

We now turn to computations on the Chow ring of $\Psi$.
For this we note that $\Psi$ is the projectivization of the universal subbundle $\calS$ on $(\bbP^2)^*$ (considering this dual projective plane as a the grassmannian $\mathbb G(1,2)$).
We have
\[
A((\bbP^2)^*) = \bbC[\sigma_1]/(\sigma_1^3=0)
\]
and it is therefore a complex vector space of dimension three, with basis
$\{1,\sigma_1, \sigma_1^2 = \sigma_{11}\}$
(using Schubert cycle notation).
Then, by the description of $\Psi$ as $\bbP(\calS)$,
we have
\[
A(\Psi) = A((\bbP^2)^*)[\zeta]/(\zeta^2-\sigma_1\zeta+\sigma_{11}, \zeta^3)
\]
where $\zeta = [\mathcal L_1]$.
We note that the total Chern class of $\calS$ is $1-\sigma_1+\sigma_{11}$.

To analyze the symmetric powers of $\Omega_{\Psi/(\bbP^2)^*}$, we note that the relative tangent bundle
$\calT=\calT_{\Psi/(\bbP^2)^*}$ is a line bundle, whose first Chern class is
\[
c_1(\calT) = \sum_{i=0}^1 \binom{2-i}{1-i} c_i(\calS) \zeta^{1-i}
\]
(see \cite[Theorem 11.4]{3264}),
and therefore is equal to $2\zeta - \sigma_1$.
Hence taking symmetric powers and dualizing we see that
\[
c(\Sym^\ell(\Omega_{\Psi/(\bbP^2)^*}) = 1+\ell(\sigma_1-2\zeta)
\]
and since $c_1(\calL_d) = d\zeta$, we have
\[
c_1(\calL_d \otimes \Sym^\ell(\Omega_{\Psi/(\bbP^2)^*}) = (d-2\ell)\zeta + \ell\sigma_1.
\]

\subsection{The hyperflex invariant}\label{ssec:hyper}
For $\ell=3$, using \eqref{totalchernprincipalparts} we then have 
\[
c(\calP^3_{\Psi/(\bbP^2)^*}(\calL_d))
= \prod_{k=0}^3 (1+(d-2k)\zeta + k \sigma_1)
\]
which is
\[
[1+d\zeta][1+(d-2)\zeta+\sigma_1][1+(d-4)\zeta+2\sigma_1][1+(d-6)\zeta+3\sigma_1].
\]
The required number we are looking for is the top Chern class $c_3$.
We note that $\sigma_1^3=\zeta^3=0$, so the only terms that survive here in degree three are the two terms $\zeta^2\sigma_1$ and $\zeta\sigma_1^2$, both of which are equal to one in the top Chern group.

Multiplying this all out gives:

\begin{proposition}\label{prop:SC} One has
$$
\deg(\calH_{(4),d})= 18d^2-66d+36 = 6(d-3)(3d-2).
$$
\end{proposition}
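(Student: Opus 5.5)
The plan is to interpret $\deg(\calH_{(4),d})$ as a degeneracy class on the threefold $\Psi$, identify that class with $c_3(\calP^3_{\Psi/(\bbP^2)^*}(\calL_d))$, and then evaluate the product formula of \S\ref{ssec:hyper} in $A(\Psi)$.

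\emph{Interpretation.} Let $\calP$ be a general pencil, spanned by curves $F_0,F_1$. Each $F_i$ gives a global section $s_i$ of the rank $4$ bundle $E=\calP^3_{\Psi/(\bbP^2)^*}(\calL_d)$: at $(L,p)$ it is the $3$-jet of $F_i|_L$ at $p$. A curve $\lambda F_0+\mu F_1$ of $\calP$ has a hyperflex at $p$ along $L$ exactly when $\lambda s_0+\mu s_1$ vanishes at $(L,p)$. So the points of $S$ in \eqref{eq:incidence_pencil} are the points of $\Psi$ where the bundle map $\calO_\Psi^{2}\to E$, $(\lambda,\mu)\mapsto\lambda s_0+\mu s_1$, drops rank. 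By Porteous, this degeneracy locus has expected codimension $4-2+1=3=\dim\Psi$, and its class is $c_3(E)$.

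\emph{Transversality.} Two facts are needed.
\begin{itemize}
\item The sections of $E$ coming from $|\calO_{\bbP^2}(d)|$ generate $E$ at every point. This holds because $d\ge 3$ and $4$ conditions on a line are independent for degree $d$ curves.
\item A general pencil is a general $2$-dimensional subspace of a generating space of sections.
\end{itemize}
A Kleiman--Bertini type argument for degeneracy loci then shows that $S$ is finite and reduced, of the expected dimension. Since $\pi_3$ is generically one-to-one onto $\calH_{(4),d}$ (a general curve with a hyperflex has exactly one), $\#S=\deg\calH_{(4),d}$. I expect this transversality and generic-injectivity step to be the only real obstacle. The first thing I would try is to check it by the dimension count of \S1: $\calI_{(4),d}$ is irreducible of dimension $N_d-1$, and $\pi_3$ is finite.

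\emph{Computation.} Set $a_k=(d-2k)\zeta+k\sigma_1$ for $k=0,\dots,3$. Then $c_3(E)=e_3(a_0,\dots,a_3)$, the third elementary symmetric function. In $A(\Psi)$ we have $\zeta^3=\sigma_1^3=0$. The relation $\zeta^2=\sigma_1\zeta-\sigma_{11}$ gives $\zeta^2\sigma_1=\zeta\sigma_1^2$, and $\zeta\sigma_{11}$ is the class of a point. So for each triple, the product $(p_i\zeta+q_i\sigma_1)(p_j\zeta+q_j\sigma_1)(p_l\zeta+q_l\sigma_1)$ has degree
\[
p_ip_jq_l+p_ip_lq_j+p_jp_lq_i+p_iq_jq_l+p_jq_iq_l+p_lq_iq_j .
\]
Summing this over the four triples $\{i,j,l\}\subset\{0,1,2,3\}$, with $p_k=d-2k$ and $q_k=k$, gives a quadratic polynomial in $d$, which should be $18d^2-66d+36$. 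I will check the result numerically at $d=4$, where the answer should be $60$, and at $d=3$, where it should vanish, consistent with the factorization $6(d-3)(3d-2)$.
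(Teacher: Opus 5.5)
Your proposal is correct and is essentially the paper's proof: the paper also takes the top Chern class $c_3$ of $\calP^3_{\Psi/(\bbP^2)^*}(\calL_d)$ from the product $\prod_{k=0}^3(1+(d-2k)\zeta+k\sigma_1)$ and evaluates it using $\zeta^3=\sigma_1^3=0$ and $\zeta^2\sigma_1=\zeta\sigma_1^2=1$; your four triples contribute $3d^2-6d$, $4d^2-9d$, $5d^2-18d$ and $6d^2-33d+36$, which sum to $18d^2-66d+36$. The degeneracy-locus, transversality and birationality justification you add is left implicit in the paper, which simply asserts that the top Chern class is the required number, so your write-up is if anything more complete.
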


This coincides with the Salmon-Cayley answer and the above proof has been given in \cite [\S 11.3.1]{3264}.
Note that $\deg(\calH_{(4),3})=0$ as it should be; for $d=4$ we get the well-known value of $60$.

\subsection{The curves of flex points and of flex lines}\label{ssec:flex}
For the $\ell=2$ case, the total Chern polynomial can be written as
\begin{align*}
	c &=  \prod_{k=0}^2 c(\calL_d\otimes \Sym^k(\Omega_{\Psi/(\bbP^2)^*}))
	= \prod_{k=0}^2 (1+(d-2k)\zeta + k \sigma_1) \\
	&= [1+d\zeta][1+(d-2)\zeta+\sigma_1][1+(d-4)\zeta+2\sigma_1].
\end{align*}
From this we compute
\[
c_2 = (3d^2-12d+8)\zeta^2 + (6d-8)\zeta\sigma_1 + 2\sigma_1^2.
\]
This is the class in the Chow ring of $\Psi$ that records the dependency locus of $2$ general sections.
If we take two general sections (given by two polynomials of degree $d$),
they will be dependent if and only if some linear combination is zero,
i.e., if and only if some linear combination has a flex tangent $L$ at a point $p$,
i.e., at the point $(L,p)\in\Psi$.

Consider now a general pencil $\mathcal{P}$ of curves of degree $d$. 
Let $F_{\calP,p}=B_{(3),d,\calP,1}$
be the curve in $\bbP^2$ of flex points of the curves of the pencil $\calP$,
and let 
$F_{\calP,L}=\calM_{(3),d,\calP}$
be the curve in $(\bbP^2)^*$ 
of flex lines of the curves of the pencil $\calP$.

If we intersect this class $c_2$ with $\zeta$, we are  measuring
the number of curves in the pencil $\mathcal{P}$
that have a flex point lying on a general line (whose class is $\zeta$);
this is exactly the degree of $F_{\calP,p}$.

For the dual reasons,  if we intersect the class $c_2$ with $\sigma_1$, we are  measuring
the number of curves in the pencil $\mathcal{P}$
that have a flex tangent passing through a general point of the plane, and
this is the degree of $F_{\calP,L}$.
(It is also the Pl\"ucker number of flex points on the general curve.)

Summing up we have:

\begin{proposition}\label{prop:flex} One has
\begin{gather*}
\deg(F_{\calP,p}) = 6d-6,  \deg(F_{\calP,L})= 3d(d-2), \text{and} \\
p_g(F_{\calP,L}) = p_g(F_{\calP,p}) = 12d^2-39d+25.
\end{gather*}
\end{proposition}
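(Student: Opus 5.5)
The two degrees come from intersecting the class $c_2$ just computed with $\zeta$ and with $\sigma_1$, as explained before the statement. The genus comes from adjunction on a smooth model of the curve $S$ of diagram \eqref{eq:incidence_pencil}.

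\emph{Degrees.} In $A(\Psi)$ we use $\zeta^3=\sigma_1^3=0$ and $\zeta^2\sigma_1=\zeta\sigma_1^2=1$. Then
\[
c_2\cdot\zeta=(6d-8)+2=6d-6,\qquad c_2\cdot\sigma_1=(3d^2-12d+8)+(6d-8)=3d(d-2).
\]
For these numbers to be the degrees of $F_{\calP,p}$ and $F_{\calP,L}$, two things are needed. First, the degeneracy locus has the expected dimension and is reduced. Second, the projections $S\to F_{\calP,p}$ and $S\to F_{\calP,L}$ are birational. For a general pencil, a general point of $F_{\calP,p}$ is a flex of exactly one curve of $\calP$ (the one through it), with a unique flex line there; hence $S\to F_{\calP,p}$ is birational. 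A general flex line $L$ of a general member is a simple flex of only that curve, at one point; hence $S\to F_{\calP,L}$ is birational as well. Birationality also gives $p_g(F_{\calP,L})=p_g(F_{\calP,p})=g(\widetilde S)$.

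\emph{Smooth model and genus.} Write $\calP=\{\lambda f+\mu g\}$ and let $t$ be the hyperplane class of $\bbP^1$. I realize $S$ as the zero locus in $\Psi\times\bbP^1$ of the section $\lambda\bar f+\mu\bar g$ of the rank-$3$ bundle
\[
E=\calP^2_{\Psi/(\bbP^2)^*}(\calL_d)\boxtimes\calO_{\bbP^1}(1).
\]
Since $d\ge 2$, the sections coming from $H^0(\calO_{\bbP^2}(d))$ generate $\calP^2_{\Psi/(\bbP^2)^*}(\calL_d)$. A Kleiman--Bertini type argument, applied to the incidence of pairs $(f,g)$, should then show that for a general pencil this zero locus is a smooth curve of class $c_3(E)$. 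I expect this smoothness/transversality step to be the main point requiring care.

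Granting smoothness, adjunction gives
\[
2g(S)-2=\bigl(K_{\Psi\times\bbP^1}+c_1(E)\bigr)\cdot c_3(E).
\]
The ingredients are as follows.
\begin{itemize}
\item $\Psi$ is a $(1,1)$ divisor in $\bbP^2\times(\bbP^2)^*$, so $K_\Psi=-2\zeta-2\sigma_1$ and $K_{\Psi\times\bbP^1}=-2\zeta-2\sigma_1-2t$.
\item From the flex Chern polynomial, $c_1(E)=(3d-6)\zeta+3\sigma_1+3t$.
\item Since $t^2=0$ and $E$ has rank $3$, $c_3(E)=c_3+c_2t$, where $c_2,c_3$ are the Chern classes of $\calP^2_{\Psi/(\bbP^2)^*}(\calL_d)$.
\end{itemize}
Hence
\[
2g-2=\bigl((3d-8)\zeta+\sigma_1\bigr)\cdot c_2+c_3 .
\]
The first term equals $(3d-8)(6d-6)+3d^2-6d=21d^2-72d+48$ by the degree computation above. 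The class $c_3$ is the $\zeta^2\sigma_1,\zeta\sigma_1^2$ part of $[d\zeta][(d-2)\zeta+\sigma_1][(d-4)\zeta+2\sigma_1]$, a routine expansion which should give $3d^2-6d$. Then $2g-2=24d^2-78d+48$, i.e.\ $g=12d^2-39d+25$, as claimed.
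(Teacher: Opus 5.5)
Your degree computation is the paper's own ($c_2\cdot\zeta$ and $c_2\cdot\sigma_1$ in $A(\Psi)$). The genus is where you genuinely differ: the paper gives no argument and simply refers to \cite[\S 11.3.3]{3264} and \cite{Ku}, whereas you prove it by adjunction.

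You realize the flex curve $S=\pi_3^{-1}(\calP)$ of diagram \eqref{eq:incidence_pencil} as the zero locus of $\lambda\bar f+\mu\bar g$ in $\Psi\times\bbP^1$. The numbers are right. The expansion you left as routine gives $c_3=d(3d-8)+2d=3d^2-6d$, the Pl\"ucker flex count of a single curve, as it must be. Hence $2g-2=24d^2-78d+48$.

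The smoothness step you flag is not actually delicate. The space $H^0(\calO_{\bbP^2}(d))\otimes H^0(\calO_{\bbP^1}(1))$ generates $E$ and consists precisely of the sections $\lambda\bar f+\mu\bar g$. So the generic-section Bertini theorem in characteristic $0$ applies directly, with no incidence argument over pairs $(f,g)$. Also, $c_2$ is exactly the pushforward to $\Psi$ of $c_3(E)=c_3+c_2t$. This ties your genus computation to the same class used for the degrees.

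What your route buys is a self-contained proof. It uses only the Chern classes already on the page plus $K_\Psi=-2\zeta-2\sigma_1$. What it does not give is the local geometry of $F_{\calP,p}$ that the paper uses later in \S\ref{ssec:dbf}: the class $(6d-6)H-3E_b-2E_n$ on the blown-up plane, meaning triple points at the base points and nodes at the nodes of the singular members. That description gives the same genus by adjunction with $K=-3H+E_b+E_n$. So does Riemann--Hurwitz for the projection to $\calP$: the degree is $3d(d-2)$, with ramification $6(d-3)(3d-2)$ from hyperflexes and $4$ at each of the $3(d-1)^2$ nodes. These are useful consistency checks.

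Your birationality claim for $S\to F_{\calP,L}$ is asserted rather than proved. It does follow from a dimension count: for a general pencil, no line is a flex line at two points of one member, and only finitely many are flex lines of two members. The paper relies on it just as implicitly.
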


\begin{proof}
Indeed, we have 
\[
\deg(F_{\calP,p}) = c_2\cdot \zeta = (6d-8)\zeta^2\sigma_1 + 2 \zeta\sigma_1^2 = 6d-6
\]
since $\zeta^3=\sigma_1^3=0$ and $\zeta^2\sigma_1 = \zeta\sigma_1^2 = 1$.

Similarly
\[
\deg(F_{\calP,L}) = c_2\cdot\sigma_1 = (3d^2-12d+8)\zeta^2\sigma_1 + (6d-8)\zeta\sigma_1^2
= 3d^2-6d=3d(d-2).
\]
For the statements about the geometric genera,
see \cite[Section 11.3.3]{3264} and \cite {Ku}.
\end{proof}

The above proof can be found 
in \cite[Section 11.3.2]{3264} and in \cite {Ku}.  

\section{A Degeneration Recursion method}
\label{sec:recursion}

Let $\calP$ be a general pencil of curves of degree $d$. 
In this section we introduce a degeneration method to compute the degree of $\calM_{(2,2),d, \calP}$, 
i.e., the curve of bitangent lines to curves in $\calP$.

\subsection{The degeneration}\label{ssec:deg}  
Consider a general pencil $\calP'$ of curves of degree $d-2$,
and take a general fixed conic $\Gamma$;
we take $\bar \calP=\Gamma + \calP'$ 
as a degeneration of a general pencil $\calP$ of curves of degree $d$,
and use this to generate a recursion method to compute the degree of $\calM_{(2,2),d, \calP}$.

The bitangents to curves in $\bar\calP$ are of the following types:
\begin{enumerate}[(i)]
\item the bitangents to curves in $\calP'$;
\item the tangents to curves in $\calP'$ that are also tangent to $\Gamma$;
\item given a curve $\Gamma+C$ with $C\in \calP'$, the lines that pass through a point $p\in \Gamma\cap C$ and are also tangent to $C$ somewhere;
\item given a curve $\Gamma+C$ with $C\in \calP'$, the lines that pass through a pair of points $p,q\in \Gamma\cap C$.
\end{enumerate}

\subsection{Degeneration multiplicities}\label{ssec:dm}

In the list above, some of the bitangents have to be counted with multiplicity; namely, each curve of type (iii) (resp. type (iv)) is the limit of $\nu>1$ (resp. $\mu>1$) bitangents to curves in general pencils $\calP$. Our goal in this subsection is to compute $\nu$ and $\mu$.

\begin{proposition}\label{prop:numu} 
	One has $\nu=2$ and $\mu = 4$.
\end{proposition}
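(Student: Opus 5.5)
The plan is a local computation. We deform $\Gamma + C$ to a smooth curve and count how many bitangents of the nearby smooth curves converge to a given limiting line. The deformation is $C_t = \{gf + t h = 0\}$, where $g=0$ is $\Gamma$, $f=0$ is $C\in\calP'$, and $h$ is a general form of degree $d$. A general pencil $\calP$ degenerates to $\bar\calP$ in this way, so the multiplicity of a limiting bitangent $L$ equals the number of branches of bitangents to $C_t$ tending to $L$ as $t\to 0$. Near a point $p\in\Gamma\cap C$ (a transverse node of $\Gamma+C$, with $h(p)\neq 0$), choose local coordinates with $\Gamma=\{x=0\}$ and $C=\{y=0\}$. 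The deformed curve is then locally the smoothing $xy=\epsilon$, where $\epsilon \sim t\,h(p)/(\text{unit})$. Since everything reduces to this local model of a smoothed node, we compute how lines through or near $p$ become tangent to the hyperbola-like branch $xy=\epsilon$.

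\textbf{Key local fact.} A line through $p$ with slope $m$ distinct from the two branch directions meets $xy=\epsilon$ in two points near $p$. For fixed small $\epsilon$, consider lines $y = m x + b$ near a given line through $p$. Tangency to $xy=\epsilon$ is the discriminant condition $b^2 + 4m\epsilon = 0$, so $b = \pm 2\sqrt{-m\epsilon}$. Thus the smoothed node imposes on the line parameters the condition $b^2=-4m\epsilon$, a conic-type condition whose solutions come in two branches $b=\pm\sqrt{\cdot}$ tending to $b=0$, i.e.\ to lines through $p$. Equivalently, the locus of lines tangent to $C_t$ near $p$ converges, in the dual plane, to the pencil of lines through $p$ counted with multiplicity $2$. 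This matches the dual degree: $\deg$ of the dual of $C_t$ is $d(d-1)$, while the dual of $\Gamma+C$ has degree $2+(d-2)(d-3)$. The remainder $d(d-1)-2-(d-2)(d-3)=4(d-2)$ is exactly twice the number $2(d-2)$ of nodes $\Gamma\cap C$.

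\textbf{Counting.} For type (iii), $L$ passes through $p$ and is tangent to $C$ at a smooth point $q$ away from $\Gamma$. Near $q$ the tangency condition deforms with multiplicity one. Near $p$ the condition is the doubled pencil, i.e.\ two smooth branches $b=\pm2\sqrt{-m\epsilon}$, each meeting the tangency curve at $q$ transversally. Hence $\nu=2$. For type (iv), $L$ passes through two distinct nodes $p,q$. Each contributes two branches, giving $2\cdot 2=4$ solutions near $L$, so $\mu=4$. In both cases we need transversality of the branch intersections in the dual plane, and we must ensure the deformation is generic enough that no further solutions coalesce; this follows from the generality of $\calP'$, $\Gamma$, and $h$.

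\textbf{Main obstacle.} The delicate step is making the ``two branches'' statement rigorous and checking that each pair of branches meets transversally, so that the multiplicities multiply and no extra tangency is hidden. The branches are $\sqrt{\epsilon}$-ramified over the $t$-line, so one should pass to a base change $t=s^2$ and work in the family over $s$. I would first try an explicit parametrization in affine line coordinates $(m,b)$, together with a Puiseux expansion in $s$, checking that the solutions near $L$ are $2$ (respectively $4$) distinct analytic branches. As a consistency check, the resulting recursion for $\deg\calM_{(2,2),d,\calP}$ should reproduce the Pl\"ucker-type totals.
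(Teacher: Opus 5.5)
Your argument is correct in outline, but it follows a genuinely different route from the paper. The paper never analyzes the smoothing. It notes that the question is local at a node, takes a general quartic with one node, and compares global counts. Such a quartic has $16$ proper bitangents (Pl\"ucker) and $6$ lines through the node tangent elsewhere (Riemann--Hurwitz for the $g^1_2$ cut by lines through the node on the genus-$2$ normalization), while a smooth quartic has $28$. Monodromy makes the six contributions equal, so $28=16+6\nu$ gives $\nu=2$. The paper then simply asserts $\mu=2\nu$ because a type (iv) line passes through two nodes.

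You instead exhibit the mechanism. Near any line through $p$ transverse to both branches, the tangent lines to $C_t$ at points near a smoothed node $p$ form two sheets converging to the pencil $p^\vee$ of lines through $p$. Hence $\nu=(2p^\vee\cdot C^\vee)_L=2$ and $\mu=(2p^\vee\cdot 2q^\vee)_L=4$, and the intersections are transverse because the tangent to $C^\vee$ at $L$ is $q^\vee\neq p^\vee$. The paper's route is quicker and uses only classical numbers. Yours actually proves the multiplicative step $\mu=2\nu$ (in the paper's style this would need a separate check on a binodal quartic, $28=8+8\nu+\mu$), and it shows the limiting bitangents are distinct.

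One point to tighten: in analytic coordinates with $\Gamma=\{x=0\}$ and $C=\{y=0\}$, the lines of the plane are no longer of the form $y=mx+b$, so $b^2+4m\epsilon=0$ is only a leading term. Use instead affine coordinates whose axes are the tangent lines of $\Gamma$ and $C$ at $p$, normalizing the linear parts of $g$ and $f$ to $x$ and $y$. Set $t=s^2$, $x=sX$, $y=sY$, $b=sB$. The curve becomes $XY+h(p)+O(s)=0$ and tangency becomes $B^2-4m\,h(p)+O(s)=0$, so the implicit function theorem yields exactly your two sheets $b=\pm 2s\sqrt{m\,h(p)}+O(s^2)$. Alternatively, your degree count $4(d-2)$, together with transitivity of monodromy on $\Gamma\cap C$, already shows that each $p^\vee$ occurs with multiplicity exactly $2$ in the flat limit of the dual curves; conservation of number then finishes the argument.
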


\begin{proof}
Since the computation is local, 
we may determine $\nu$ by computing the contribution 
(to the number of bitangents for a curve of degree $d$) 
of a line that contains a node of that curve,
and is tangent elsewhere.
	
We may assume that the curve is a general quartic with a node. 
This curve has  $16$ \emph{proper bitangents}, 
i.e., bitangent lines whose contact points are smooth points of the curve. 
It has also $6$ \emph{improper bitangents}, 
namely lines passing through the node and tangent elsewhere. 
The general quartic curve has $28$ bitangents. 
From this and from obvious monodromy arguments, 
it follows that the $6$ improper bitangents each count with multiplicity $2$ 
as bitangents to the curve.  

This proves that $\nu=2$, 
and since in case (iv) we have two nodes,
we conclude that $\mu = 2\nu = 4$.
\end{proof}

\begin{remark}
This degeneration allows one to compute
the Pl\"ucker number of bitangent lines.
We leave this as an exercise.
\end{remark}

\subsection{Degree of the curve of bitangent lines}\label{ssec:bit}
We now will compute the degree of the curve $\calM_{(2,2),d, \calP}$ by recursion, applying the degeneration introduced in \S \ref {ssec:deg}.

We first need the following:

\begin{lemma}\label{lem:var} 
Let $\calP$ be a general pencil of curves of degree $d$ 
and let $x$ be a general point of the plane. 
Then the curve $D_{\calP,x}$ 
described by the contact points of tangent lines 
to members of the pencil passing through $x$ has degree $2d-1$. 
\end{lemma}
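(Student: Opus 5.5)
Let $x=[x_0:x_1:x_2]$ be the general point and let the pencil be $\calP=\{\lambda F+\mu G=0\}$, where $F,G$ are general forms of degree $d$. I would write $D_{\calP,x}$ as an explicit plane curve. A point $p$ lies on $D_{\calP,x}$ exactly when three things hold: $p$ lies on some member $C_{\lambda,\mu}$ of the pencil, the member through $p$ is unique (for $p$ outside the base points it is $\lambda=G(p)$, $\mu=-F(p)$), and the tangent line to that member at $p$ passes through $x$. The last condition is that the polar of $x$ vanishes at $p$, i.e. $\lambda\,\partial_xF(p)+\mu\,\partial_xG(p)=0$, where $\partial_x=\sum x_i\partial/\partial x_i$. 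Eliminating $(\lambda,\mu)$ gives the equation
$$H_x:=G\cdot\partial_xF-F\cdot\partial_xG=0 .$$
This is a form of degree $d+(d-1)=2d-1$ in $p$. Equivalently, it is the Jacobian determinant of $F$, $G$ and the linear form $\ell$ with $\partial \ell=x$, up to Euler relations. So $D_{\calP,x}$ is contained in the curve $H_x=0$, which has degree $2d-1$.

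Next I would check that the curve $H_x=0$ is exactly $D_{\calP,x}$, with no extraneous components and no multiplicities. Three checks are needed.
\begin{itemize}
\item $H_x$ is not identically zero. Otherwise $F/G$ would be constant along lines through $x$, which is impossible for general $F,G$.
\item Its zero set is the closure of the locus above. Away from the $d^2$ base points the equivalence was shown above. At a base point $b$, $H_x$ vanishes, and $b$ is also on the closure: the member of the pencil tangent at $b$ to the line $xb$ passes through $b$, so $b$ is a genuine contact point. The base points therefore add no spurious component, only points.
\item $H_x$ is reduced. I would argue this by a degeneration or specialization: take for instance $x$ and a pencil for which $H_x$ visibly factors into distinct reduced factors. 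Alternatively, a point-count gives it: a general line $L$ meets $D_{\calP,x}$ in the points $p\in L$ where the polar condition holds, and these are the roots of a degree $2d-1$ binary form on $L$. For a general pencil these roots are simple, by a transversality argument on the incidence variety $\{(p,C): p\in C,\ x\in T_pC\}$. That incidence variety maps birationally onto $D_{\calP,x}$, because for $p$ outside the base points the member $C$ is determined by $p$.
\end{itemize}

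The main obstacle is this last reducedness, i.e. showing that the degree-$(2d-1)$ equation is not a multiple structure and that nothing is counted with multiplicity. As a consistency check I would intersect $D_{\calP,x}$ with a line $L$ through $x$. The intersection points are the $d^2$ base points (none lies on a general $L$), together with, for each point $p\in L$, the condition that the member through $p$ is tangent to $L$ at $p$. These are the points where the restriction map $L\to\bbP^1$ given by $[F:G]$ ramifies, and Riemann–Hurwitz gives $2d-2$ of them. The point $x$ itself is also on $D_{\calP,x}$, since the tangent line of the member through $x$ passes through $x$. This gives a total of $2d-1$, confirming the degree.
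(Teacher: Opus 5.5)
Your proof is correct, but it follows a different route from the paper's. The paper writes no equation. It intersects $D_{\calP,x}$ with a general member $C$ of the pencil, which gives the $d^2$ base points plus the $d(d-1)$ contact points of the tangents from $x$ to $C$ (the class of $C$), and then divides $d^2+d(d-1)$ by $d$. That argument is shorter, but it takes for granted that all these intersections are transverse. Your explicit form $H_x=G\,\partial_xF-F\,\partial_xG$ makes the upper bound on the degree immediate and moves the work to reducedness. It also justifies the paper's count: on $C=\{F=0\}$ it restricts to $G\cdot\partial_xF$, i.e. the base points plus the polar of $x$.

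For reducedness you do not need the specialization or the unspecified transversality argument in your third bullet. Your closing ``consistency check'' is already the proof once it is made precise. Parametrize a line $L\ni x$ as $sx+ty$ and set $f=F(sx+ty)$, $g=G(sx+ty)$. Euler's relation then gives
\[
H_x(sx+ty)=\tfrac{t}{d}\,\bigl(f_sg_t-f_tg_s\bigr).
\]
So the roots on $L$ are $x$ and the $2d-2$ ramification points of the restricted $g^1_d$. For general $L$ these roots are simple and different from $x$, hence $H_x$ has no multiple factor. Equivalently, $2d-1$ distinct points on $L$ force $\deg D_{\calP,x}\ge 2d-1$, while $D_{\calP,x}\subseteq V(H_x)$ gives the reverse inequality.

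Two small corrections:
\begin{itemize}
\item The Jacobian $\det(\nabla F,\nabla G,\nabla\ell)$ has degree $2d-2$, so it is not $H_x$. The correct determinantal form is $d\,H_x=\det(x\times p,\nabla F,\nabla G)$, where $x\times p$ is the line joining $x$ and $p$.
\item $H_x$ vanishes not only at the base points but also at the $3(d-1)^2$ nodes of the singular members, where ``the tangent line'' is undefined. This is harmless, since finitely many points do not affect the closure, and it is consistent with the class $(2d-1)H-E_b-E_n$ in Lemma \ref{lem:ident}(a).
\end{itemize}
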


\begin{proof}
The curve $D_{\calP,x}$ passes through the $d^2$
base points of the pencil with multiplicity one.
Also it intersects the general member $C$ of the pencil
in $d(d-1)$ points away from the base points 
(this is the degree of the dual curve to $C$).
Therefore 
$$
\deg (D_{\calP,x})=\frac {d(d-1)+d^2}d=2d-1.
$$
\end{proof}

\begin{proposition}\label{prop:dfd} 
For every integer $d\geq 5$, one has
\begin{align*}\label{eq:rec}
\deg (\calM_{(2,2),d, \calP}) &=\deg (\calM_{(2,2),d-2, \calP}) + 4(d-3) + 4(3d^2-19d+28) + 4(2d-5) \\
&= \deg (\calM_{(2,2),d-2, \calP})  + 4(d-2)(3d-10).
\end{align*}
\end{proposition}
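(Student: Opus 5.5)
We compute $\deg(\calM_{(2,2),d,\calP})$ as the number of points of $\calM_{(2,2),d,\calP}$ on a general line of $(\bbP^2)^*$. Such a line is the pencil of lines through a general point $x\in\bbP^2$, so we must count the bitangents through $x$ to curves of $\calP$. We specialize $\calP$ to $\bar\calP=\Gamma+\calP'$ as in \S\ref{ssec:deg}. We then count the limits through $x$ of types (i)--(iv), weighting type (iii) by $\nu=2$ and type (iv) by $\mu=4$ (Proposition \ref{prop:numu}); types (i) and (ii) we take with multiplicity one. By conservation of number the total equals $\deg(\calM_{(2,2),d,\calP})$. We also need that the points of $\calM$ on this dual line specialize exactly to the listed lines through $x$; we argue this by the genericity of $x$, $\Gamma$ and $\calP'$.

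Types (i), (ii) and (iv) are straightforward.

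Type (i) contributes $\deg(\calM_{(2,2),d-2,\calP})$ by definition.

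For type (ii), the lines through $x$ tangent to $\Gamma$ are $2$. A general line is tangent to $2(d-2)-2=2(d-3)$ members of $\calP'$, so type (ii) gives $4(d-3)$.

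For type (iv), let $\sigma$ be the involution of $\Gamma\cong\bbP^1$ cut by lines through $x$. Let $f:\Gamma\to\calP'\cong\bbP^1$ be the map induced by the pencil; it has degree $2(d-2)$. We count $p\in\Gamma$ with $f(p)=f(\sigma(p))$. The graph of $(f,f\circ\sigma)$ meets the diagonal of $\bbP^1\times\bbP^1$ in $2(2d-4)$ points. Removing the $2$ fixed points of $\sigma$ leaves $4d-10$ ordered pairs, i.e.\ $2d-5$ lines. With $\mu=4$ this gives $4(2d-5)$.

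The main obstacle is type (iii). Here we must count pairs $(p,t)$ with $p\in\Gamma\cap C_t$, such that the line $xp$ is tangent to $C_t$ at a point other than $p$. I would set up the correspondence $Z\subset\Gamma\times\calP'$, $Z=\{(p,t): xp \text{ is tangent to } C_t\}$. For fixed $p$ it has $2(d-3)$ points. For fixed $t$ it has $2(d-2)(d-3)$ points, namely $2$ points of $\Gamma$ on each of the $(d-2)(d-3)$ tangents from $x$ to $C_t$. Intersecting $Z$ with the graph of $f$ gives
\[
2(d-3)\cdot(2d-4)+2(d-2)(d-3)=6(d-2)(d-3).
\]
From this we must subtract the spurious solutions, where the tangency point of $xp$ with $C_t$ is $p$ itself. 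These are the points of $D_{\calP',x}\cap\Gamma$, numbering $2(2d-5)=4d-10$ by Lemma \ref{lem:var}.

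The delicate point is the local intersection multiplicity of $Z$ and the graph of $f$ at these spurious points. I expect it to be $2$: the fibers of $f$ at such points are tangent to $Z$, because tangency is a double condition. I would verify this in local coordinates. Granting it, type (iii) yields $6(d-2)(d-3)-2(4d-10)=2(3d^2-19d+28)$ lines, hence a contribution $\nu\cdot 2(3d^2-19d+28)=4(3d^2-19d+28)$.

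Summing the four contributions gives the first formula, and the simplification to $4(d-2)(3d-10)$ is routine algebra.
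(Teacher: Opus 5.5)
Your proposal is correct and follows the paper's proof essentially step by step. It uses the same degeneration and the same weights. For type (iii), your intersection of $Z\subset\Gamma\times\calP'$ with the graph of $f$ is exactly the paper's coincidence count for the correspondence $p\mapsto D(p)$ on $\Gamma$, which has bidegree $[2(d-2)(d-3),4(d-2)(d-3)]$; you also remove the $4d-10$ points of $D_{\calP',x}\cap\Gamma$ with multiplicity two, which the paper likewise asserts without a local check. The only real difference is in type (iv): you derive the number $2d-5$ directly, by letting the image of $(f,f\circ\sigma)$ meet the diagonal and discarding the two fixed points of $\sigma$, whereas the paper quotes this count from ACGH.
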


\begin{proof} 
We count separately the degrees of the four curves
that comprise the four cases listed above.
This therefore involves computing, in each of these cases,
the number of bitangents that pass through a general point $x$ of the plane.

The lines in (i) and (ii) are proper bitangents, 
so they count with multiplicity $1$;
the lines of type (iii) count with multiplicity $2$
and the lines of type (iv) count with multiplicity $4$
(see Proposition \ref {prop:numu}).
 
The number of bitangents of type (i) that pass through $R$ is $\deg (C_{(2,2),d-2, \calP})$. 
The bitangents of type (ii) are nothing else than all the tangents to $\Gamma$, 
but each one has to count with multiplicity $2d-6$, 
because each such line is tangent to $2d-6$ curves in the pencil $\calP'$. 
So the contribution of the bitangents of type (ii) is $2\cdot (2d-6)=4(d-3)$. 

To compute the number of bitangents of type (iii) that pass through $R$, 
we argue as follows. 
Take a general point $p\in \Gamma$. 
There is a unique curve $C\in \calP'$ containing $p$, 
and there are $(d-2)(d-3)$ tangent lines to $C$ passing through $x$. 
These lines cut out on $\Gamma$ a total of $2(d-2)(d-3)$ points 
that form a divisor $D(p)$. 
So we have a correspondence $p\longrightarrow D(p)$ on $\Gamma$. 

The inverse of this correspondence can be obtained in the following way. 
Let $q\in \Gamma$ be a general point. 
Consider the line $L$ joining $q$ with $r$. 
There are $2d-6$ curves of $\mathcal{P}'$ tangent to $L$, 
and each of these curves cuts out $2(d-2)$ points on $\Gamma$, 
for a total of $4(d-2)(d-3)$ points;
these are exactly the points $p$ such that $q\in D(p)$. 
So the correspondence $D$ has bidegree $[2(d-2)(d-3),4(d-2)(d-3)]$.
and thus it has $6(d-2)(d-3)$ coincidence points.

The configurations that we are counting give rise to coincident points,
but there are configurations that give coincident points
that are not of type (iii);
these come from points $p \in \Gamma$
such that the unique curve $C$ in the pencil $\mathcal{P}'$ through $p$
has tangent line at $p$ passing through $x$.
To count these we consider the curve of contact points
of the tangent lines to the curves of the pencil that pass through $x$.
This curve has degree $2d-5$ by Lemma \ref {lem:var}.
Therefore it meets $\Gamma$ in $4d-10$ points, which must be removed from the
coincident count with multiplicity two;
the result is $6(d-2)(d-3) - 2(4d-10) = 6d^2-38d+56$, each counting with multiplicity two.
On the whole, the contribution of the bitangents of type (iii) is then $4(3d^2-19d+28)$.

Finally, to compute the number of bitangents of type (iv) that pass through $x$, 
we notice that the lines through $x$ cut out on $\Gamma$ a $g^1_2$ 
whereas the curves in $\mathcal{P}'$ cut out on $\Gamma$ a $g^1_{2d-4}$. 
The number of divisors of the $g^1_2$ 
that are contained in some divisor of the $g^1_{2d-4}$ is $2d-5$ 
(see \cite [p. 344]{ACGH}). 
These divisors give rise to bitangents of type (iv), 
whose contribution is therefore $4(2d-5)$. \end{proof} 

\begin{corollary}\label{cor:count} One has
$\deg(\calM_{(2,2),d, \calP})= 2d(d-2)(d-3).$
\end{corollary}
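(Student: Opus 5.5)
We prove Corollary~\ref{cor:count} by induction on $d$, using the recursion of Proposition~\ref{prop:dfd}:
\[
\deg (\calM_{(2,2),d, \calP})=\deg (\calM_{(2,2),d-2, \calP}) + 4(d-2)(3d-10)\qquad (d\geq 5).
\]
Since the recursion lowers $d$ by two, we need two base cases, one for each parity of $d$. The natural choices are $d=3$ and $d=4$.

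\emph{Base case $d=3$.} A smooth cubic has no bitangents. Hence $\calM_{(2,2),3,\calP}$ is empty and its degree is $0$, which agrees with $2\cdot 3\cdot 1\cdot 0=0$.

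\emph{Base case $d=4$.} Here we must count the bitangents of members of a general pencil of quartics passing through a general point $x$. We argue by degeneration, exactly as in \S\ref{ssec:deg}: take $\bar\calP=\Gamma+\calP'$ with $\calP'$ a general pencil of conics. Type (i) contributes nothing, since conics have no bitangents. For the remaining types, the computation in the proof of Proposition~\ref{prop:dfd} only uses facts valid for $d-2=2$, namely Lemma~\ref{lem:var} for conics (where $2d-5=3$) and the count of $2d-5$ divisors from \cite{ACGH}. So the same formula applies at $d=4$, and the three contributions sum to $4(d-2)(3d-10)=4\cdot 2\cdot 2=16$.

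The main obstacle is checking this value independently, since the recursion is only stated for $d\geq 5$. The expected answer is $2\cdot 4\cdot 2\cdot 1=16$, and I would verify it by a Chern class computation on $\Psi_2$, or by a direct incidence count. Failing that, I would justify that the proof of Proposition~\ref{prop:dfd} remains valid for $d=4$, with type (i) contributing $0$. The verification of the individual terms at $d=4$ is:
\begin{itemize}
\item type (ii): $4(d-3)=4$;
\item type (iii): $4(3\cdot 16-76+28)=0$;
\item type (iv): $4(2d-5)=12$.
\end{itemize}
These sum to $4+0+12=16$, as required.

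\emph{Inductive step.} Assume $\deg(\calM_{(2,2),d-2,\calP})=2(d-2)(d-4)(d-5)$. Factoring out $(d-2)$, it suffices to check the identity
\[
2(d-4)(d-5)+4(3d-10)=2d(d-3).
\]
Expanding, the left side is
\[
2d^2-18d+40+12d-40=2d^2-6d,
\]
which equals the right side. This completes the induction for both parities of $d$.
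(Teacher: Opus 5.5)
Your proposal is correct and matches the paper's argument. The paper also starts from the vanishing in low degree (it uses $d=2,3$). It gets $16$ for $d=4$ by applying the recursion of Proposition~\ref{prop:dfd} from $d=2$, which is exactly your degeneration count with type (i) contributing $0$, and it confirms this value independently with SageMath before concluding by the same induction. Your inductive identity $2(d-4)(d-5)+4(3d-10)=2d(d-3)$ also checks out.
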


\begin{proof}
Of course $\deg(\calM_{(2,2),2, \calP}) =\deg(\calM_{(2,2),3, \calP}) = 0$;
Proposition \ref{prop:dfd} then implies that
 $\deg(\calM_{(2,2),4, \calP})=16$, which we also computed independently with SageMath.
Then using the recursion formula from  Proposition \ref {prop:dfd}, 
the assertion easily follows. 
\end{proof}

\subsection{Similar computations}\label{sse:other} 
One can use the degeneration--recursion method introduced above to make other computations similar to the one of 
$\deg(\calM_{(2,2),d, \calP})$. 

For example, using it we can compute the following recursion formula:

\begin{proposition}\label{prop:trit} For $d \geq 5$, one has
\begin{equation*}\label{eq:tritgtrecursion21}
\deg(\calH_{(2,2,2),d}) = \deg(\calH_{(2,2,2),d-2})+ 10d^4 - 112d^3 + 350d^2 - 56d - 720.
\end{equation*}
\end{proposition}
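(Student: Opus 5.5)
The plan is to repeat the degeneration of \S\ref{ssec:deg}. We replace a general pencil $\calP$ of degree $d$ curves by $\bar\calP=\Gamma+\calP'$, with $\calP'$ a general pencil of degree $d-2$ and $\Gamma$ a general conic. We then count the members $\Gamma+C$ of $\bar\calP$ that carry a line which is a limit of tritangent lines, each with its limit multiplicity. The local input is Proposition~\ref{prop:numu}: a line through a node of the curve counts as one tangency with multiplicity $\nu=2$ at that point. So a configuration using $j$ points of $\Gamma\cap C$ should contribute with multiplicity $2^j$, by the same product argument that gave $\mu=2\nu$. Since a line meets $\Gamma$ in at most two points, and is tangent to $\Gamma$ at most once, the possible types for $\Gamma+C$ are the following.

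\begin{enumerate}[(a)]
\item $L$ is tritangent to $C$. These contribute $\deg(\calH_{(2,2,2),d-2})$, with multiplicity one.
\item $L$ is bitangent to $C$ and tangent to $\Gamma$. This is the intersection, in $(\bbP^2)^*$, of $\calM_{(2,2),d-2,\calP'}$ with the dual conic $\Gamma^*$. By Corollary~\ref{cor:count} it gives $2\cdot 2(d-2)(d-4)(d-5)$, with multiplicity one.
\item $L$ is bitangent to $C$ and passes through a point $p\in\Gamma\cap C$, with multiplicity $2$.
\item $L$ is tangent to $C$, tangent to $\Gamma$, and passes through a point of $\Gamma\cap C$, with multiplicity $2$.
\item $L$ is tangent to $C$ and passes through two points of $\Gamma\cap C$, with multiplicity $4$.
\item A degenerate type: $L$ is tangent to $\Gamma$ at a point $p\in\Gamma\cap C$, where it has contact $3$ with $\Gamma+C$, and is tangent to $C$ elsewhere. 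Its multiplicity is to be determined locally.
\end{enumerate}

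For (c) I would use a correspondence on $\Gamma$, as in the proof of Proposition~\ref{prop:dfd}. Given $p\in\Gamma$, let $C$ be the member of $\calP'$ through $p$. Send $p$ to the divisor cut on $\Gamma$ by the lines through $p$ that are tangent to $C$ twice. Alternatively, and more cleanly, intersect the curve $\calB$-type locus of bitangent lines through points of $\Gamma$ with the requirement that the curve of the pencil through the point of $\Gamma$ is the one in question. Concretely, I would consider the curve $\{(p,L): p\in\Gamma,\ L\ni p,\ L \text{ bitangent to the member through } p\}$ and compute its class. Its degree in the $p$-direction is the number of bitangents of $C$, and in the $L$-direction it is governed by $\deg\calM_{(2,2),d-2,\calP'}$. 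One then removes the spurious cases in which $p$ is itself a bitangency point. These are counted by $\calB_{(2,2),d-2,\calP'}\cdot\Gamma$; their degree would have to be computed separately, or extracted from the same correspondence.

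For (d) the setup is as follows. For each tangent line $T$ to $\Gamma$, each of its two points of $\Gamma\cap T$ (counted doubly at the contact point) meets a unique member of $\calP'$, and we ask that $T$ be tangent to that member. This is a correspondence on $\Gamma\cong\bbP^1$, handled via Lemma~\ref{lem:var} and the degree $2d-6$ of the tangent condition. Type (e) mirrors type (iv) of Proposition~\ref{prop:dfd}: take a pair $p,q$ in a divisor of the $g^1_{2d-4}$ cut by $\calP'$ on $\Gamma$, and require that the chord $pq$ be tangent to the corresponding $C$. I would count it by computing the class of the curve of chords in $(\bbP^2)^*$ spanned by pairs in divisors of the $g^1_{2d-4}$, using \cite[p.~344]{ACGH}. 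Its intersection with the curve of tangent lines to members of $\calP'$ through a point then gives the count, after removing chords whose endpoints lie on different members.

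The main obstacle is multiplicities and spurious solutions rather than the raw counts. Two points need care: type (f), and whether configurations like (d) collide with it when the tangency point of $L$ with $\Gamma$ lies on $C$. I would settle these by a local computation, or by testing on a small case such as $d=6$, where the recursion must reproduce the value $(d^2+3d-2)(d-3)(d-4)(d-5)$ from \cite{O}, whose differences the stated quartic must equal. Finally, summing (a)--(f) with the multiplicities above should give the polynomial $10d^4-112d^3+350d^2-56d-720$. Any unknown multiplicity, in particular that of (f), can be pinned down by matching this sum.
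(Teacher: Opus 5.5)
The paper gives no written proof of this proposition; it only says the degeneration of \S\ref{ssec:deg} yields it. Your strategy is that method. Your types (a), (b), (c), (e) with multiplicities $1,1,2,4$ are the right ones, and your count $4(d-2)(d-4)(d-5)$ for (b) is correct. But the proposal stops where the actual work begins, and it contains an error.

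Types (d) and (f) are the same configuration. A line tangent to $\Gamma$ meets $\Gamma$ only at its contact point, so the point of $\Gamma\cap C$ on $L$ must be that contact point. Moreover, this configuration is not a limit of tritangents at all. A member $\Gamma+C$ lies on $\calH_{(2,2,2),d}$ only if $(\Gamma+C)|_L\ge 2p_1+2p_2+2p_3$ for some line $L$. Here $(\Gamma+C)|_L=3p+2c+\cdots$, which contains only two double points. Equivalently, after smoothing the node, a nearby line meets the curve in only three points near $p$, so at most one tangency can come from there. So this configuration contributes $0$, and giving (d) multiplicity $2$ produces a wrong total.

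Your fallback of pinning down unknown multiplicities by matching the target quartic, or by comparing with \cite{O}, is circular: it assumes the statement. If you do take the closed formula as known, e.g.\ from \S\ref{ssec:sing}, the recursion is just the identity
$(d^2+3d-2)(d-3)(d-4)(d-5)-(d^2-d-4)(d-5)(d-6)(d-7)=10d^4-112d^3+350d^2-56d-720$,
and no degeneration is needed.

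The counts for (c) and (e) are only gestured at, and the constructions you sketch do not work as written. The set $\{(p,L):p\in\Gamma,\ L\ni p,\ L \text{ bitangent to } C_p\}$ is finite (it is what you want to count), not a curve. Meeting the chord curve with tangent lines through a point does not encode tangency to the same member. Here is what works, with $e=d-2$ and $b_e=e(e-2)(e^2-9)/2$.

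\emph{Type (c).} Take the correspondence on $\Gamma$ sending $p$ to the $2b_e$ points cut on $\Gamma$ by the bitangents of $C_p$. Its inverse has degree $2e\deg\calM_{(2,2),e,\calP'}$. Its coincidences are the (c) points, each counted once, plus the points of $\Gamma\cap\calB_{(2,2),e,\calP'}$, each counted twice (a local computation). Corollary~\ref{cor:count} and Proposition~\ref{prop:bbt} then give $N_c=(e-3)(5e^3-15e^2-26e+24)$.

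\emph{Type (e).} In $(\bbP^2)^*\times\calP'$ the curve of pairs (chord of $\Gamma\cap C$, $C$) has class $(2e-1)Mq+e(2e-1)M^2$. Meeting it with the class of Lemma~\ref{lem:opl} for $\calP'$, namely $e(e-1)M+(2e-2)q$, gives $3e(e-1)(2e-1)$. From this you must remove two kinds of spurious solutions:
\begin{itemize}
\item the $4(2e-1)(e-1)$ chords tangent to $C$ at an endpoint (found by a correspondence count on $\Gamma$), each counted twice;
\item the $4e-2$ members tangent to $\Gamma$, each counted once.
\end{itemize}
This leaves $N_e=(e-3)(2e-1)(3e-2)$.

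Summing, $4e(e-2)(e-3)+2N_c+4N_e=10e^4-32e^3-82e^2+320e-168$, which is exactly $10d^4-112d^3+350d^2-56d-720$.
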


\begin{corollary}\label{cor:trit} One has
\begin{equation*}\label{eq:tritgtrecursion3}
\deg(\calH_{(2,2,2),d})=(d^2+3d-2)(d-3)(d-4)(d-5).
\end{equation*}
\end{corollary}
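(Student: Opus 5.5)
The plan is to solve the recursion of Proposition \ref{prop:trit} by induction on $d$, in steps of $2$, exactly as in the proof of Corollary \ref{cor:count}. Set $f(d)=(d^2+3d-2)(d-3)(d-4)(d-5)$. It suffices to check two things:
\begin{itemize}
\item[(a)] $f(d)-f(d-2)$ equals the increment $10d^4-112d^3+350d^2-56d-720$ of Proposition \ref{prop:trit} for all $d\ge 5$;
\item[(b)] $f$ gives the correct value in two consecutive base cases, one of each parity.
\end{itemize}

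For the base cases I take $d=3$ and $d=4$. For $d\le 4$ no line can be tritangent to a general curve of the family, since a tritangent line meets the curve with total multiplicity at least $6>d$. Hence the incidence variety $\calI_{(2,2,2),d}$ projects only to the locus of curves containing a line as a component, which has codimension larger than one. So $\calH_{(2,2,2),d}$ is not a hypersurface, and its degree as a divisor class is $0$. This agrees with $f(3)=f(4)=0$, because of the factors $(d-3)(d-4)$.

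One subtlety is that Proposition \ref{prop:trit} is only stated for $d\ge 5$, so the step from $d=3$ to $d=5$ relies on the recursion with $\deg(\calH_{(2,2,2),3})=0$. I will therefore check directly that the increment at $d=5$ is $f(5)=0$:
\[
10\cdot625-112\cdot125+350\cdot25-280-720=6250-14000+8750-1000=0.
\]
This is consistent, since a quintic has only a finite, codimension $\ge 2$, family of members with a tritangent line. At $d=6$ the increment is
\[
12960-24192+12600-336-720=312,
\]
while $f(6)=52\cdot 3\cdot 2\cdot 1=312$. This matches.

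For (a) I expand both sides as polynomials of degree $6$ in $d$ and compare coefficients. Equivalently, it suffices to check agreement at five further values of $d$, since the difference $f(d)-f(d-2)$ is a polynomial of degree $5$ and the increment has degree $4$. Writing $f(d)=d^6-9d^5+\dots$, the leading part of $f(d)-f(d-2)$ is $12d^5+\dots$. This should match only after the terms cancel appropriately, so I must recheck the degrees carefully.

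The main obstacle is exactly this step. It is a degree mismatch to resolve: a degree-$6$ polynomial should have a degree-$5$ second difference, but the stated increment is quartic. So I will recompute $f(d)-f(d-2)$ carefully, and reconcile it either with a corrected reading of the increment or with the normalization of the recursion before concluding.
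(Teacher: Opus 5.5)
Your plan is the paper's own argument: the paper's proof is just ``$t_3=t_4=0$, then apply Proposition~\ref{prop:trit}''. Your base cases and your numerical checks at $d=5,6$ are correct. As written, however, the proposal is not a proof. You never carry out step (a), and the ``degree mismatch'' you name as the main obstacle comes from a miscount. The polynomial $f(d)=(d^2+3d-2)(d-3)(d-4)(d-5)$ has degree $2+3=5$, not $6$. Expanding,
\[
f(d)=d^5-9d^4+9d^3+105d^2-274d+120,
\]
which is one third of the polynomial $3d^5-27d^4+\cdots$ appearing in \S\ref{ssec:sing}. Hence $f(d)-f(d-2)$ is a quartic with leading term $10d^4$, exactly as the increment requires.

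Carrying out (a) explicitly with the differences $d^k-(d-2)^k$ for $k\le 5$ gives
\begin{align*}
f(d)-f(d-2)&=(10d^4-40d^3+80d^2-80d+32)-9(8d^3-24d^2+32d-16)\\
&\quad+9(6d^2-12d+8)+105(4d-4)-548.
\end{align*}
Collecting terms gives exactly $10d^4-112d^3+350d^2-56d-720$. So the identity holds for all $d$, and no corrected reading or renormalization of the recursion is needed. Together with $f(3)=f(4)=0$, which matches $\deg\calH_{(2,2,2),3}=\deg\calH_{(2,2,2),4}=0$, induction in steps of two gives the corollary. Once the degree is corrected, your fallback also works: two polynomials of degree at most $4$ that agree at five values of $d$ coincide, and you already have agreement at $d=5$ and $d=6$.
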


\begin{proof} One has  $t_3=t_4=0$. Then using Proposition \ref{prop:trit} the assertion holds. 
\end{proof}

Recall that the formula of Corollary \ref{cor:trit} 
can be found in the paper \cite{O} that appeared only online.
In a similar way one can compute $\deg(\calH_{(4),d})= 6(d-3)(3d-2)$ (computed already in \S \ref {ssec:hyper}) and $\deg(\calH_{(2,2,2),d})=(d^2+3d-2)(d-3)(d-4)(d-5)$ (see Section \ref {sec:fb} for this number). However we do not dwell here on this type of computation, since we think it has been sufficient to indicate the power of the degeneration--recursion method to compute $\deg(\calM_{(2,2),d, \calP})$.

\section{The tangency incidence correspondence for a pencil of curves}\label{sec:unif}
Let $\mathcal{P}$ be a general pencil of curves of degree $d$ in the plane:
all singular fibers have one single simple node.
The number of singular fibers is easily seen to be $\delta = 3(d-1)^2$
using an Euler number count.

\subsection{The tangency incidence correspondence}\label{ssec:esse} Consider the incidence correspondence
\[
S = \{(L,p,C) \;|\; \mult_p(L,C) \geq 2\} \subset (\bbP^2)^*\times\bbP^2\times\mathcal{P}
\]
which was suggested to us by Sheldon Katz (private communication).
The surface
$S$ fits in the diagram \eqref{eq:incidence_pencil} as $S=\pi_3^{-1}(\calP)$, giving
\begin{equation*}
	\begin{tikzcd}[column sep=large] 
		&\calI_{(2),d} 
		\arrow[ddr, "\pi_3"] 
		\arrow[ddl, phantom, "\Psi~"{name=P, description}]
		\arrow[to=P, "\pi_\Psi"] & S 
		\arrow[l,hook,"\iota"]
		\arrow[ddr]
        \arrow[dd, phantom, "\urcorner"{name=intersect, description}, very near start] \\
		& 
		\arrow[dl, from=P, "\pi'_1" swap] 
		\arrow[d, from=P, "\pi'_2" ] 
		\\
		(\bbP^2)^* 		& \bbP^2		& {|\mathcal O_{\mathbb P^2}(d)|} &
		\calP 		\arrow[l,hook]
	\end{tikzcd} 
\end{equation*}
For simplicity we write $\pi'_i=\pi'_i\circ\pi_\Psi\circ\iota$ for $1\leq i\leq	 2$ and $\pi_3=\pi_3\circ \iota$.

We first note that $\pi'_2:S\to\bbP^2$ exhibits $S$ as the blowup of the plane
at the $d^2$ base points of $\mathcal{P}$, and at the $\delta$ nodes of the singular fibers.
Hence $S$ is a smooth rational surface, with Euler number 
\[
e(S) = 3+d^2+3(d-1)^2.
\]

The first projection map $\pi'_1:S \to (\bbP^2)^*$ has, as a general fiber over a line $L$,
the collection of ordered pairs $(p,C)$ where $C$ is tangent to $L$ at $p$.
Now the pencil $\mathcal{P}$ restricts to $L$ giving a pencil $\calP_L$ which is a $g^1_d$ on $L$.
A general $g^1_d$ has exactly $2d-2$ members with a double point,
hence $\pi'_1$ is a finite map of degree $2d-2$.

\subsection{The branch curve and hyperflexes}\label{ssec:branch} The following is immediate:

\begin{lemma}\label{lem:branch}
The branch curve $B$ of $\pi'_1$ consists of\\
\begin{inparaenum}
\item[(a)] the curve $\mathcal D$ of all triples $(L,p,C)\in S$ with $L$ a flex tangent to $C$ at $p$ (note that 
$\mathcal D$ is isomorphic to 
$C_{(3),d, \calP}$), plus \\
\item[(b)] the $d^2$ lines which are the dual lines to the base points of the pencil $\mathcal{P}$.
\end{inparaenum}
\end{lemma}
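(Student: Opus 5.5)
The plan is to identify the branch locus fiberwise. For a line $L$, the fiber of $\pi'_1$ over $L$ is the set of pairs $(p,C)$ with $C\in\calP$ tangent to $L$ at $p$. These are exactly the ramification points of the pencil $\calP_L$, the $g^1_d$ on $L\cong\bbP^1$. I will view this $g^1_d$ as a map $f_L\colon L\to\calP\cong\bbP^1$ of degree $d$, defined after removing base points; the fiber of $\pi'_1$ is then the set of critical points of $f_L$. By Riemann--Hurwitz there are $2d-2$ of them counted with multiplicity, which recovers the degree $2d-2$. Hence $L$ lies in the branch curve exactly when the fiber has fewer than $2d-2$ distinct points. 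There are two ways this can happen.

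First, suppose $L$ contains no base point, so $\calP_L$ is base point free of degree $d$. Then $\pi'_1$ is finite over a neighbourhood of $L$, and the local multiplicity of the fiber point $(p,C)$ equals the ramification index of $f_L$ at $p$ minus one. This holds because $S$ is locally the critical locus of the family $f_L$ as $L$ varies. That multiplicity is at least $2$ exactly when $\mult_p(L,C)\ge 3$, i.e. when $L$ is a flex tangent to $C$ at $p$; this gives component (a). The case where two distinct critical points of $f_L$ have the same critical value does not cause ramification of $\pi'_1$, since the points $(p,C)$ and $(p',C)$ are different points of $S$. In the same way, if $C$ is nodal at $p$, the points of $S$ over $p$ lie on the exceptional curve, and those are generically unramified.

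Second, suppose $L$ passes through a base point $b$. Then $\calP_L$ has $b$ as a base point, so the moving part has degree $d-1$ and only $2d-4$ critical points. The remaining two points of the fiber, counted with multiplicity, must come from the exceptional curve $E_b\subset S$ over $b$. Here $\pi'_2$ blows up $b$, and $E_b$ parametrizes $(L,b,C_L)$, where $C_L$ is the unique member tangent to $L$ at $b$. The step I expect to be the main obstacle is to show that the fiber over such an $L$ really degenerates, that is, that $\pi'_1$ is ramified along $E_b$ mapping onto the dual line $b^*$. I would handle it with a local computation. Choose coordinates with $b=0$ and write the pencil as $F+tG$. For $L$ the line through $0$ in direction $v$, the condition $\mult_0(L,F+tG)\ge 2$ is a single linear equation in $t$. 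A transversal slice of lines near $b^*$ is given by translating $L$ a distance $\varepsilon$ from $0$. Restricting to $L$, the pencil becomes $f_\varepsilon(s)+t\,g_\varepsilon(s)$, where $f_0$ and $g_0$ both vanish simply at $s=0$. The tangency equations near $s=0$ then reduce to the vanishing of the Wronskian $f g'-f'g$ near $s=0$. This Wronskian has a double root at $\varepsilon=0$, namely a root of order $2\cdot 1+\dots$ coming from the common zero, which splits into two simple roots as $\varepsilon$ moves. So two sheets come together over $b^*$, i.e. $b^*$ is in the branch locus with the sheets coinciding along $E_b$.

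Finally, I will argue that there is nothing else. Outside the dual lines $b^*$, the only way the critical points of $f_L$ can collide is through a higher ramification point, which is case (a). A collision in the same curve at a node would require $L$ to contain a node; since there are only finitely many nodes and the resulting degeneracy is generic, this does not give a curve of branching beyond what is already listed. Generality of $\calP$, in particular that all singular members have a single simple node, guarantees this. Putting these together, $B=\pi'_1(\mathcal D)\cup\bigcup_b b^*$. Moreover, $\mathcal D\to\pi'_1(\mathcal D)$ identifies $\mathcal D$ with the curve of flex lines by Proposition \ref{prop:flex}.
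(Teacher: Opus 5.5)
Your proof is correct, and it follows the reasoning the paper leaves implicit when it calls the lemma immediate: the fiber of $\pi'_1$ over $L$ is the set of double points of the $g^1_d$ that $\calP$ cuts on $L$, and two of these collide only at a flex tangency or when $L$ passes through a base point of $\calP$. The base-point case does not need the Wronskian computation, since $E_b$ meets $(\pi'_1)^{-1}(L)$ only in $(L,b,C_L)$, so that fiber has at most $2d-3$ distinct points; and your rule that the local degree equals $\mult_p(L,C)-1$ already shows that a general line through a node gives an unramified point.
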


The formulas developed by Enriques in \cite[p. 182]{E}, and extended by Iversen in \cite{I},
relate the invariants of a finite map of surfaces
in a way that generalizes the Hurwitz formulas for a map of curves.
We use only a simple form, referred to as \emph{Severi's formula},
written by Iversen in the Introduction of \cite{I}:
\[
e(S)-(2d-2)e((\bbP^2)^*)
= 2p_g(B) - 2 - \kappa
\]
where $\kappa$ is the number of cusps of the branch curve $B$ and $p_g$ indicates the \emph{geometric genus}, i.e., the \emph{arithmetic genus of the normalization}.
(This version is in the simplified case of having only nodes and cusps
for the branch curves, which is true in our case.)

Given Lemma \ref {lem:branch}, we see that
\begin{align*}
p_g(B) &= p_g(\mathcal D \cup \{d^2 \text{ disjoint lines}\}) \\
&= p_g(\mathcal D) - d^2= p_g(\calM_{(3),d, \calP}) - d^2\\
&= (12d^2-39d+25) -d^2 = 11d^2-39d+25.
\end{align*}

Putting things together we then see that the number of cusps of the branch curve (actually of $\mathcal D$) is
\begin{align*}
\kappa &= 2p_g(B) - 2 - e(S)+(2d-2)e((\bbP^2)^*) \\
&= (22d^2-78d+50)-2 -(3+d^2+3(d-1)^2) +(2d-2)(e) \\
&= 6(d-3)(3d-2).
\end{align*}

Since a cusp of the branch curve corresponds exactly to an undulation point on one of the curves in the pencil, this computes in a different way the number of undulation (hyperflex) points in a pencil,
and is therefore the degree of the undulation (hypersurface) locus $\calH_{(4),d}$ inside $|\mathcal O_{\bbP^2}(d)|$ (computed already in \S \ref {ssec:hyper}). .

\subsection{Double point formula and bitangents}\label{ssec:dbf} Let us now investigate the morphism
\[
f:S \to Y=(\bbP^2)^*\times\mathcal{P}, \;\;\; f(L,p,C) = (L,C)
\]
which is the joint projection map and has finite fibres.  We have that $S$ is a surface, $Y$ is a three-fold,
and the map is generically injective: given a general curve $C$ in the pencil, and a general tangent line $L$ to $C$, there is only one point of tangency $p$.
Our goal is to study the double points of this map
in order to get information about the bitangents to the curves $C$ in the pencil $\mathcal{P}$.

Let us first compute the Chern polynomial of the tangent bundle $T_Y$ on the threefold $Y$.
We have the two projections $Y\to (\bbP^2)^*$ and $Y\to \calP$,
and since $Y$ is a product, we have
\[
T_Y = p_1^*(T_{(\bbP^2)^*})\oplus p_2^*(T_\calP)
\]
where $p_1$ and $p_2$ are the two projections.
The Chow rings of the two factors are given as follows
\[
A((\bbP^2)^*) = \bbC[\barM]/\barM^3, \;\;\; A(\calP) = \bbC[\barq]/\barq^2
\]
where $\barM$ is the class of a (dual) line in $(\bbP^2)^*$,
and $\barq$ is the class of a point in the pencil $\calP \cong \bbP^1$.
We have that
\[
c(T_{(\bbP^2)^*}) = (1+\barM)^3 = 1+3\barM+3\barM^2, \;\;\;
c(T_\calP) = (1+\barq)^2 = 1+2\barq
\]
and so if we set $M = p_1^*(\barM)$ and $q= p_2^*(\barq)$, we have
\begin{equation}\label{eq:ciuno}
c(T_Y) = (1+3M + 3M^2)(1+2q)
= 1 + (3M+2q) + \text{ terms of degree } n\geq 2.
\end{equation}
We note that the class $M$ represents a divisor of the form
\[
(\{\text{ line in } (\bbP^2)^*\} \times\mathcal{P} ) \subset (\bbP^2)^*\times\mathcal{P}
\]
and the class $q$ represents a divisor of the form $(\bbP^2)^*\times \{\text{point}\}$.
The divisor classes $M$ and $q$ freely generate the Picard group of $Y$.

Next we want to pull these classes back to $S$ via the map $f:S\to Y$.
Recall that $S$ is isomorphic (via the projection to $p\in \bbP^2$)
to the blowup of the plane at the base points of the pencil,
and at the nodes of the singular members of the pencil.
Let $E_b$ be the sum of the exceptional divisors over the $d^2$ base points of the pencil, and let $E_n$ be the sum of the exceptional divisors over the $3(d-1)^2$ nodes of the singular members of the pencil.

Each of these exceptional divisors consists of triples $(L,p,C)$ where
$p$ is either a base point or the node of the singular member,
$L$ is a line through $p$,
and $C$ is then determined as the unique member of the pencil which is tangent to $L$ at $p$ (if $p$ is a base point) or which is the singular member of the pencil (if $p$ is a node).

We have
\[
E_b^2 = -d^2; \;\; E_n^2 = -3(d-1)^2;\;\; E_b \cdot E_n = 0
\]
as divisors in $S$.

Let $H$ be the pullback of the line class on $\bbP^2$ to $S$ via $\pi_2'$.

\begin{lemma}\label{lem:ident} As divisor classes on $S$, we have:
\begin{itemize}
\item[(a)] $f^*(M) = (2d-1) H - E_b - E_n$;
\item[(b)] $f^*(q) = dH - E_b$.
\end{itemize}
\end{lemma}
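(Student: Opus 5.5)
Both classes can be computed by pulling back explicit divisors from $Y$ and reading them off on $S$, which $\pi'_2$ identifies with the blowup of $\bbP^2$ at the $d^2$ base points and the $3(d-1)^2$ nodes. So $\mathrm{Pic}(S)$ is generated by $H$ and the individual exceptional curves. In each case we take a general representative of the class on $Y$, describe its preimage in $S$ as the proper transform of a plane curve, and record its degree and its multiplicities at the blown-up points. By generality (and monodromy of the base points, and of the nodes, of a general pencil) the multiplicities are the same at all base points and at all nodes, so the answer has the form $aH-bE_b-cE_n$.

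For (b), $q$ is represented by $(\bbP^2)^*\times\{C_0\}$ with $C_0$ a general member of $\calP$. Its preimage is the set of $(L,p,C_0)$ with $L$ tangent to $C_0$ at $p$, which maps isomorphically to $C_0$ away from the exceptional curves. Its image in $\bbP^2$ is $C_0$, of degree $d$. The curve $C_0$ passes simply through each base point, and it avoids the nodes because it is smooth and not a singular member. For the exceptional curve over a base point $b$, the only triple lying over $C_0$ is the one with $L$ the tangent to $C_0$ at $b$, so the fibre meets that curve transversally once. Hence $f^*(q)=dH-E_b$. As a check, $(dH-E_b)^2=d^2-d^2=0$, as it must be for a fibre class.

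For (a), $M$ is represented by $\{L\ni x\}\times\calP$ with $x$ a general point. Its preimage is the set of $(L,p,C)$ with $L$ tangent to $C$ at $p$ and $x\in L$. Under $\pi'_2$ this maps to the curve $D_{\calP,x}$ of Lemma \ref{lem:var}, which has degree $2d-1$. Equivalently, $D_{\calP,x}$ is the locus where $x$ lies on the polar, i.e.\ the zero set of the $2\times2$ determinant built from $F,G$ and their derivatives in direction $x$, which has degree $(d-1)+d$. It passes simply through the base points, as noted in the proof of that lemma. At a base point $b$, the exceptional curve is parametrized by lines $L$ through $b$, and exactly one of them passes through $x$, namely the line $\overline{bx}$. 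So the intersection with each $E_b$ component is $1$, consistent with multiplicity $1$ at $b$. The same argument applies at a node $n$ of a singular member: every line through $n$ is "tangent" there, and exactly one of them contains $x$. So the pullback meets each node-exceptional curve exactly once, giving coefficient $1$ for $E_n$. Directly, the determinant vanishes at the node because both partials of the singular member vanish there, and it has multiplicity one there by genericity. Assembling, $f^*(M)=(2d-1)H-E_b-E_n$.

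The main obstacle is justifying that all these multiplicities are exactly one and that the preimages are reduced, in particular transversality at the nodes. To handle it, we would work in local coordinates near a node, where the singular member is $xy=0$ and a nearby member is $xy=t$, and check that the determinant defining $D_{\calP,x}$ has nonzero linear term there for general $x$. Equivalently, we can verify the intersection number of $f^*(M)$ with each exceptional curve directly, as done above. As a consistency check, $f^*M\cdot f^*q=(2d-1)d-d^2=d(d-1)$, the class of $C$, as required.
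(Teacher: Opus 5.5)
Your proof is correct and follows the paper's argument. Both write each class as $aH-bE_b-cE_n$ and read off the coefficients by intersecting with a general line and with the exceptional curves over base points and nodes, where the exceptional counts are $1$ (and $0$ for $f^*(q)$ at the nodes) for exactly the reasons you give. The one difference is the coefficient $2d-1$ of $H$ in $f^*(M)$: you take it from Lemma~\ref{lem:var} (or from the polar determinant of degree $d+(d-1)$), whereas the paper computes it as the degree of the image in $(\bbP^2)^*$ of a general line $H$ under the map sending $p$ to its tangent line, a rational curve with a single $(2d-2)$-fold point.
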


\begin{proof}
By symmetry, the coefficients of these divisors on all of the exceptional curves over the base points must be the same, and similarly for the exceptional curves over the nodes.
Hence in both cases we can write these divisors in terms of $H$, $E_b$, and $E_n$.

Let us address $f^*(M)$ first.
Fix a general dual line $\barM$, and a general line $H$ in the plane.
The intersection number $f^*(M)\cdot H$ records the number of triples $(L,p,C) \in S$
such that $L \in \barM$ and $p \in H$.
Now $\barM$ is the set of lines through a general point $x$ in the plane,
so this records the number of triples $(L,p,C)\in S$ such that $p \in H$ and $x \in L$.

Since $H$ is general, as the point $p$ varies in $H$, the curve $C$ in the pencil $\mathcal{P}$ through $p$ is uniquely determined by $p$; and then the line $L$ which is tangent to $C$ at $p$ is also determined.  Therefore we have a well-defined map sending the point $p\in H$ to that tangent line $L$, giving a well-defined map $g:H \to (\bbP^2)^*$.  
There are exactly $2d-2$ points $p$ on the line $H$ where the curve $C$ through $p$ is tangent to the line $H$ at $p$, i.e., there are exactly $2d-2$ points $p$ such that the map sending $p$ to the tangent line actually sends it to the line $H$.  Other than this, the map $g$ is injective: if the tangent line is not $H$, then it will meet $H$ only at the point $p$.  Therefore the image of $g$ is a rational curve in $\bbP^2$ with a unique singular point of multiplicity $2d-2$, and we conclude that it must have degree $2d-1$.
Hence this curve meets the general line $\barM$ in $2d-1$ points.
This means that there are exactly $2d-1$ tangent lines through $x$,
and we see that $f^*(M)\cdot H = 2d-1$.

Consider now an exceptional divisor $E$ lying above a base point $x$ of the pencil.
The intersection number $f^*(M)\cdot E$ records the number of triples $(L,p,C) \in S$
such that $L \in \barM$ and $L$ contains that base point $x$.
But $\barM$ is represented by the set of lines through a general point $y$;
and so the only possibility for $L$ is the line joining $x$ and $y$.
By the above description of the exceptional divisor, once $L$ is determined, then
$p$ is the base point $x$ and $C$ is the unique member of the pencil which is tangent to $L$ at $p$. Hence $f^*(M)\cdot E = 1$.

Finally if $E$ is an exceptional divisor $E$ lying over a node $x$ of a singular member $C$ of the pencil, then the points of $E$ are represented by triples $(L,x,C)$
where $L$ is any line through the node $x$.
Hence $f^*(\barM)\cdot E$ records the number of such triples where $L$ also comes from the pencil of lines $\barM$.  This is a unique line; hence again $f^*(M)\cdot E = 1$.
This proves (a).

We follow the same line of argument in the case $f^*(q)$.
We fix a general point $\barq$ in the pencil, representing a general curve $C$ in the pencil.

We first fix a general line $H$ in the plane.
The intersection number $f^*(q)\cdot H$ records the number of triples $(L,p,C)$
with $p \in C$ and $L$ the tangent line to $C$ at $p$, with $p$ lying on $H$.  This is just the degree of $C$, which is $d$.

If $E$ is an exceptional divisor lying above a base point $p$, then $f^*(q)\cdot E$
records the number of triples $(L,p,C)$ such that $L$ is the tangent line to $C$ at $p$., and there is only one such triple. 

If $E$ is an exceptional divisor lying above a node $p$, then $f^*(q)\cdot E$ must be zero;
$E$ consists of triples $(L,p,C')$ where $C'$ is the nodal member of pencil with the node at $p$, and $q$ is represented by a general member, which is not $C'$.
This proves (b). \end{proof}

The identifications of the pullbacks in Lemma \ref {lem:ident} should come as no surprise.
Indeed, (b) is rather clear: since $q$ is the pullback of a point of the pencil to $Y$,
and we further pull back to $S$, this class $f^*(q)$ should be the linear system that gives the pencil, and we exactly have the pencil here.

Similarly, (a) is the pullback of the line class for the map of $S$ to $(\bbP^2)^*$,
which as we have seen above is a finite map of degree $2d-2$.
This is consistent with the calculation of the self-intersection:
\begin{align*}
((2d-1) H - E_b - E_n)^2  &= (2d-1)^2 - d^2 - 3(d-1)^2 \\
&= 4d^2-4d+1-d^2-3d^2+6d-3 = 2d-2.
\end{align*}

For the double point formula for the morphism $f:S\to Y$, we must next compute the class $f_*[S] \in A(Y)$.
This is a divisor in $Y$, and therefore we may write this class in terms of $M$ and $q$.

\begin{lemma}\label{lem:opl}
$f_*[S] = d(d-1)M + (2d-2) q$.
\end{lemma}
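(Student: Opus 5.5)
The plan is to determine the class $f_*[S]\in A^1(Y)$ by intersecting it with a basis of curve classes on $Y$, using the projection formula together with the pullbacks computed in Lemma \ref{lem:ident}. Since $\operatorname{Pic}(Y)$ is freely generated by $M$ and $q$, we write $f_*[S]=aM+bq$ with unknown integers $a,b$. The classes $M^2$ and $Mq$ span $A^2(Y)$, which is the group of curve classes. The intersection numbers on $Y$ are
\[
M^3=0,\qquad q^2=0,\qquad M^2q=1,
\]
since $M^2$ restricted to a fibre $(\bbP^2)^*\times\{\text{pt}\}$ is a point. This gives
\[
f_*[S]\cdot M^2=b,\qquad f_*[S]\cdot Mq=a.
\]

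Next we apply the projection formula $f_*[S]\cdot\alpha=\deg(f^*\alpha)$ for $\alpha\in A^2(Y)$. The map $f$ has finite fibres and is generically injective, as observed above, so $f_*[S]$ is the class of the image divisor with multiplicity one. The pullbacks are intersection products on $S$ of the divisors from Lemma \ref{lem:ident}, using $H^2=1$, $H\cdot E_b=H\cdot E_n=0$, $E_b^2=-d^2$, $E_n^2=-3(d-1)^2$ and $E_b\cdot E_n=0$. For $M^2$, the self-intersection $f^*(M)^2=2d-2$ was already computed above, so $b=2d-2$. For $Mq$ we compute
\[
f^*(M)\cdot f^*(q)=\bigl((2d-1)H-E_b-E_n\bigr)\cdot(dH-E_b)=d(2d-1)-d^2=d(d-1),
\]
so $a=d(d-1)$.

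As a cross-check I would also give the geometric interpretation of both numbers. The number $b=f_*[S]\cdot M^2$ counts points of $S$ lying over $\{L\}\times\calP$ for a fixed general line $L$. These are the members of the $g^1_d$ cut out on $L$ by the pencil that have a double point, and there are $2d-2$ of them. The number $a=f_*[S]\cdot Mq$ counts tangent lines through a general point $x$ to a fixed general curve $C\in\calP$. This is the class of $C$, namely $d(d-1)$.

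The only delicate point is justifying the projection formula with multiplicity one, i.e.\ that $f$ is birational onto its image. This holds because a general tangent line to a general member of $\calP$ is tangent at exactly one point, as noted before the lemma. Transversality of the intersections is not an issue, because the computation is done purely in the Chow ring of $S$.
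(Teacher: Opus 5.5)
Your proof is correct and shares the paper's skeleton: write $f_*[S]=aM+bq$, then read off $b$ and $a$ by pairing with $M^2$ and $Mq$. The difference lies in how you evaluate these pairings.

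The paper counts them directly on $Y$:
\begin{itemize}
\item $f_*[S]\cdot M^2$ is the number of members of $\calP$ tangent to a fixed general line, i.e.\ the $2d-2$ singular members of the $g^1_d$ on $L$;
\item $f_*[S]\cdot Mq$ is the class $d(d-1)$ of a general member.
\end{itemize}
This is exactly what you relegate to a cross-check.

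Your main argument instead uses the projection formula $\deg(f_*[S]\cdot\alpha)=\deg(f^*\alpha\cap[S])$ and the pullbacks from Lemma \ref{lem:ident}. This gives $b=f^*(M)^2=2d-2$ and $a=f^*(M)\cdot f^*(q)=d(2d-1)-d^2=d(d-1)$.

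The paper's route is self-contained and does not depend on Lemma \ref{lem:ident}, though it relies on genericity of $L$, $x$ and $C$ to count points without multiplicities. Your route is purely formal once Lemma \ref{lem:ident} is granted and needs no transversality. It also shows that Lemma \ref{lem:opl} is the push--pull shadow of Lemma \ref{lem:ident}. So the agreement of your computation with the direct count is a genuine consistency check on Lemma \ref{lem:ident}, whose coefficients were themselves obtained by enumerative arguments.

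One small correction: birationality of $f$ onto its image is not a delicate point for this lemma. The projection formula holds for any proper $f$, and $f_*[S]$ already includes the degree of $S$ over $f(S)$. Generic injectivity matters only if you want to identify $f_*[S]$ with the class of the reduced divisor $f(S)$, which the statement does not assert.
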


\begin{proof}
In the Chow ring of $Y$, we have $M^2\cdot q=1$ and all other monomials in $M$ and $q$ are zero (since $M^3=0$ and $q^2=0$).
Let us compute $f_*[S]\cdot M^2$, which records the number of triples $(L,p,C)$
such that $L$ is a fixed line (the unique line which is the intersection of the two $M$'s).
This is the number of curves in the pencil tangent to that line $L$, which is $2d-2$,
and gives the coefficient of $q$ in the class $f_*[S]$

The value of $f_*[S]\cdot M\cdot q$ records the number of triples $(L,p,C)$
where $C$ is a fixed general member of the pencil (corresponding to $q$)
and $L$ comes from the line in $(\bbP^2)^*$ given by a general line $M$,
i.e., $L$ passes through a general point $x \in \bbP^2$.
This is the degree of the dual curve to $C$, which is $d(d-1)$,
and gives the coefficient of $M$ in the class $f_*[S]$.
\end{proof}

\begin{proposition}\label{prop:dpf} The double point locus $\bbD(f)$ for the morphism $f:S\to Y$ has class
$$
[\bbD(f)]=[2d^3-d^2-9d+6]H - [d^2+d-6]E_b - [d^2-d-2]E_n.
$$
\end{proposition}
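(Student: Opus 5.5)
The plan is to apply the double point formula (Fulton, Theorem 9.3) to $f:S\to Y$. Lemma~\ref{lem:ident} and Lemma~\ref{lem:opl} supply essentially all the inputs, so the argument reduces to bookkeeping in $\operatorname{Pic}(S)$. For a morphism from a smooth surface to a smooth threefold, the double point class is
\[
[\bbD(f)] = f^*f_*[S] - c_1(f^*T_Y) + c_1(T_S),
\]
where the last two terms are the degree-one part of $c(f^*T_Y)\,c(T_S)^{-1}$, i.e.\ $c_1$ of the virtual normal bundle.

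I will compute the three terms separately in the basis $H,E_b,E_n$.

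First, by Lemma~\ref{lem:opl} and Lemma~\ref{lem:ident},
\[
f^*f_*[S] = d(d-1)\bigl((2d-1)H-E_b-E_n\bigr) + (2d-2)(dH-E_b).
\]

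Second, by \eqref{eq:ciuno} we have $c_1(T_Y)=3M+2q$. Pulling back with Lemma~\ref{lem:ident} gives
\[
f^*c_1(T_Y) = (8d-3)H - 5E_b - 3E_n.
\]

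Third, $S$ is the blowup of $\bbP^2$ at the $d^2+3(d-1)^2$ points, as shown in \S\ref{ssec:esse}, so $c_1(T_S) = -K_S = 3H - E_b - E_n$.

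Combining the three terms, the $H$ coefficient is
\[
d(d-1)(2d-1)+2d(d-1)-(8d-3)+3 = 2d^3-d^2-9d+6 .
\]
The $E_b$ coefficient is
\[
-d(d-1)-(2d-2)+5-1 = -(d^2+d-6).
\]
The $E_n$ coefficient is
\[
-d(d-1)+3-1 = -(d^2-d-2).
\]
These are exactly the claimed values.

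The main obstacle is not the arithmetic but justifying that the formula applies. One must check that $f$ is generically injective with finite fibres, both of which were noted in \S\ref{ssec:dbf}. One must also check that $\bbD(f)$ has the expected dimension one, so that the class computed is the honest class of the double point curve.

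The formula also counts ramification: points where $df$ fails to be injective contribute to $\bbD(f)$ as well. I would argue these form at most a finite set, or lie along curves already accounted for, because $f$ is an immersion away from finitely many points. This holds because $L$ and $C$ determine the tangency point $p$ infinitesimally except at special configurations such as hyperflexes. If such a claim is needed, I would verify it locally using the genericity of $\calP$.
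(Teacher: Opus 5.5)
Your proposal is correct and is exactly the paper's proof: the same three inputs ($f^*f_*[S]$ from Lemmas \ref{lem:ident} and \ref{lem:opl}, $f^*c_1(T_Y)=(8d-3)H-5E_b-3E_n$, and $c_1(T_S)=3H-E_b-E_n$), combined through the double point formula, with the same arithmetic.

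Your last paragraph is unnecessary, because the double point formula holds as an identity of classes with no dimension or immersion hypothesis. It is also wrong: $f$ is \emph{not} an immersion off a finite set. For $v\in T_pC$, the only possibly nonzero component of $df(v)$ is the derivative of the Gauss map of $C$, and that derivative vanishes at every flex. So $df$ drops rank along the whole flex curve $\mathcal D$, which is why $\mathcal D$ is a component of $\bbD(f)$ (with multiplicity $2$, as the paper uses next).
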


\begin{proof} Combining  Lemmas \ref {lem:ident} and \ref {lem:opl}, we then see that
\begin{align*}
f^*f_*[S] &= d(d-1)f^*(M) + (2d-2) f^*(q) \\
&= d(d-1)[(2d-1) H - E_b - E_n] + (2d-2) [dH - E_b] \\
&= [d(d-1)(2d-1)+2d(d-1)] H - [d(d-1)+2(d-1)] E_b - d(d-1) E_n \\
&= [d(d-1)(2d+1)] H - [(d-1)(d+2)] E_b - d(d-1) E_n.
\end{align*}

By \eqref {eq:ciuno}, we also have  
\begin{align*}
f^*(c_1(T_Y)) &= f^*(3M+2q) \\
&= 3[(2d-1) H - E_b - E_n] + 2[dH - E_b] \\
&= (8d-3) H - 5 E_b - 3 E_n
\end{align*}
and since $c_1(T_X) = 3H-E_b-E_n$, the double-point formula
then gives
\begin{align*}
[\bbD(f)] &= f^*f_*[S] - f^*(c_1(T_Y)) + c_1(T_X) \\
&=  [d(d-1)(2d+1)] H - [(d-1)(d+2)] E_b - d(d-1) E_n \\
&\;\;\; - [(8d-3) H - 5 E_b - 3 E_n] + [3H-E_b-E_n] \\
&= [2d^3-d^2-d-8d+6] H -[d^2+d-2-5+1]E_b - [d^2-d-3+1]E_n \\
&= [2d^3-d^2-9d+6]H - [d^2+d-6]E_b - [d^2-d-2]E_n
\end{align*}
as claimed.  
\end{proof}

The double point locus $\bbD(f)$ contains two components:\\
\begin{inparaenum}[(i)]
\item the \emph{bitangent curve} $\mathcal B$ 
described by all triples $(L,p,C)\in S$ with $L$ bitangent of $C$ and $p$ one of the tangency points of $L$ with $C$. 
Note that the projection $(L,p,C)\mapsto L$, 
restricts to $\mathcal B$ to a double cover 
$\mathcal B\longrightarrow  \calM_{(2,2),d,\calP}$. 
Moreover $\mathcal B$ is birational to the curve $B_{(2,2),d,\calP}$ 
described by the tangency points of the bitangent lines to the curves in $\calP$;\\
\item the \emph{flex curve} $\mathcal D$ 
described by all triples $(L,p,C)\in S$ 
with $L$ flex tangent to $C$ at $p$: 
note that $\mathcal D$ can be identified with the curve $F_{\mathcal{P},p}$ of flexes of the curves in $\calP$ 
(see \S \ref {ssec:flex}).
\end{inparaenum}

By Proposition  \ref {prop:flex} and by \cite {Ku} (or with simple local computation), we have that the class of $F_{\mathcal{P},p}$  is of the form
\[
[\mathcal D]=[F_{\mathcal{P},p}]=(6d-6) H - 3E_b - 2E_n.
\]
Since $\bbD(f)$ records pairs of points with the same image by $f$, then $\mathcal D$ appears with multiplicity $2$ in $\bbD(f)$. 
Hence 
\begin{align}\label{eq:bb}
[\mathcal B] &= [\bbD(f) - 2F_{\mathcal{P},p}] \nonumber\\
&= [2d^3-d^2-21d+18]H - [d^2+d-12]E_b - [d^2-d-6]E_n \nonumber\\
&= (d-3)[(2d^2+5d-6) H - (d+4)E_b - (d+2)E_n].
\end{align}
Note that
\begin{align*}
\mathcal B\cdot f^*(M) &= (d-3)[(2d^2+5d-6) H - (d+4)E_b - (d+2)E_n] \\
&\text{ }\;\;\;\cdot [(2d-1) H - E_b - E_n] \\
&= (d-3)[(2d^2+5d-6)(2d-1) - (d+4)(d^2) - (d+2)(3(d-1)^2)] \\
&= 4d^3 - 20d^2 + 24d = 4d(d^2-5d+6) = 4d(d-2)(d-3)
\end{align*}
From this, taking into account the double cover 
$\mathcal B\longrightarrow \calM_{(2,2),d,\calP}$, 
we deduce that 
\[
\deg(\calM_{(2,2),d,\calP}) = 2d(d-2)(d-3)
\]
and we find again the result of Corollary \ref {cor:count}. 
However we can deduce more from the above considerations.

\begin{proposition}\label{prop:bbt} One has 
\begin{equation}\label{eq:degb}
\deg(\calB_{(2,2),d,\calP}) = (d-3)(2d^2+5d-6).
\end{equation}
and
$$
p_a(\calB)=3d^5 - 19d^4 + 14d^3 + 120d^2 - 240d + 73.
$$
\end{proposition}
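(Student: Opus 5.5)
The plan is to read both invariants off the class of $\mathcal B$ in \eqref{eq:bb}, using that $\pi'_2\colon S\to\bbP^2$ is the blow-up of the plane at the $d^2$ base points and the $3(d-1)^2$ nodes, and that $\mathcal B$ maps birationally onto $\calB_{(2,2),d,\calP}$.

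\emph{Degree.} The degree of $\calB_{(2,2),d,\calP}$ is the number of its points on a general line. Pulled back to $S$, a general line is a curve of class $H$ that avoids all the exceptional curves. So
\[
\deg(\calB_{(2,2),d,\calP})=\mathcal B\cdot H=(d-3)(2d^2+5d-6),
\]
because $H^2=1$ and $H\cdot E_b=H\cdot E_n=0$. Before using this, I would confirm that $\mathcal B$ has no component contracted by $\pi'_2$, that is, no exceptional curve as a component. This follows from generality: a general line through a base point or a node is not a bitangent limit. I would also confirm that $\mathcal B\to\calB_{(2,2),d,\calP}$ is birational, which the paper states.

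\emph{Arithmetic genus.} I read $p_a(\calB)$ as the arithmetic genus of the curve $\mathcal B\subset S$ with the class \eqref{eq:bb}. I would compute it by adjunction on $S$:
\[
2p_a(\mathcal B)-2=\mathcal B\cdot(\mathcal B+K_S),\qquad K_S=-3H+E_b+E_n .
\]
Write $\mathcal B=aH-bE_b-cE_n$ with $a=(d-3)(2d^2+5d-6)$, $b=(d-3)(d+4)$, $c=(d-3)(d+2)$. Then
\[
\mathcal B^2=a^2-b^2d^2-3c^2(d-1)^2,\qquad \mathcal B\cdot K_S=-3a+bd^2+3c(d-1)^2,
\]
using $E_b^2=-d^2$, $E_n^2=-3(d-1)^2$ and $E_b\cdot E_n=0$. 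Expanding this polynomial identity in $d$ should give the stated degree-five expression. As a sanity check, $d=3$ should give $p_a=1$, since $\mathcal B=0$ there.

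\emph{Main obstacle.} The arithmetic is routine; the delicate point is interpretation. The adjunction computation yields the arithmetic genus of $\mathcal B$ on $S$. This is the genus of the plane curve $\calB_{(2,2),d,\calP}$ with its ordinary singularities at the base points and nodes already resolved, not the genus of the plane model itself. I would make this explicit. I would then justify that \eqref{eq:bb} is the correct class, i.e.\ that $\bbD(f)$ contains $\mathcal D$ with multiplicity exactly $2$ and has no other components. For this I would use the local analysis near flexes, which the paper already assumes.
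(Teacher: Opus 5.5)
Your proposal is correct and follows the paper's route. The degree is read off as the $H$-coefficient of $[\mathcal B]$ in \eqref{eq:bb}. Your adjunction computation on $S$, namely $2p_a-2=a^2-3a-d^2(b^2-b)-3(d-1)^2(c^2-c)$, is the same as the paper's formula $\binom{a-1}{2}-d^2\binom{b}{2}-3(d-1)^2\binom{c}{2}$, and it does expand to $3d^5-19d^4+14d^3+120d^2-240d+73$.
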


\begin{proof} The degree of $\calB_{(2,2),d,\calP}$ is nothing but the coefficient of $H$ in $[\mathcal B]$.
The arithmetic genus of $\mathcal B$ is easily  deduced from \eqref {eq:bb}, getting
\begin{align*}
p_a(\mathcal B) &= [(d-3)(2d^2+5d-6)-1][(d-3)(2d^2+5d-6)-2]/2 \\
&\;\;\; - d^2[(d-3)(d+4)][(d-3)(d+4)-1]/2 \\
&\;\;\; -3(d-1)^2[(d-3)(d+2)][(d-3)(d+2)-1]/2 \\
&= 3d^5 - 19d^4 + 14d^3 + 120d^2 - 240d + 73
\end{align*} \end{proof}

The computations above rely on the double point formula involving the relevant Chern classes; however there is a more direct and elementary approach as well.
A priori we can write 
$$
[\mathcal B]= h H - e_b E_b - e_n E_n,
$$
and we want to compute these three coefficients.

The coefficient $e_b$ is the number of times a base point is also a point of bitangency.
Fix a base point $p$, and as the curve $C$ varies in the pencil $\mathcal{P}$,
the tangent line at $p$ also varies in the pencil of lines through $p$,
and, as we saw above, exactly three of these are flexes to $C$ at $p$.
There are $d-2$ other intersections of these lines with the members of the pencil;
therefore  if we consider the curve in $\bbP^2$ of these intersections,
it will be a curve of degree $(d-2)+3=d+1$ with a triple point at $p$.
The genus of this curve is then $d(d-1)/2 - 3$.
So by Hurwitz the number of ramification points (giving a second tangency)
is $e_b=d^2+d-12 = (d-3)(d+4)$, which agrees from \eqref {eq:bb}.

Fix a node $p$ of a singular member $C$ of the pencil.
In this case the lines through $p$ meet $C$ in $d-2$ other points,
and as they vary, we obtain a $g^1_{d-2}$ on the nodal curve $C$.
Again by Hurwitz the number of ramification points is $e_n=d^2-d-6$,
again agreeing with \eqref {eq:bb}.

Finally we need to compute the degree of $B_{(2,2),d,\calP}$.
First we note that $B_{(2,2),d,\calP}$ meets each member $C$ of the pencil
in twice the number of bitangents (each one is after all counted twice in $S$),
which is given by the classical Pl\"ucker formula as $d(d-2)(d^2-9)$.
We have seen above that $B_{(2,2),d,\calP}$ has multiplicity $e_b$ at each base point;
hence the total intersection number of the general $C$ with $B_{(2,2),d,\calP}$
is equal to the double of the number of actual bitangents, plus $d^2e_b$, giving
\[
d(d-2)(d^2-9) + d^2 (d-3)(d+4) = d(d-3)(d^2+5d-6).
\]
We get the degree of $B_{(2,2),d,\calP}$ by dividing by the degree $d$ of $C$,
and hence we find again \eqref {eq:degb}. 

\section{Flex bitangents}\label{sec:fb}
Here we compute:

\begin{proposition}\label{prop:fb} One has
$
\deg (\calH_{(3,2),d})=3(d^2 + 6d - 4)(d - 3)(d - 4).
$
\end{proposition}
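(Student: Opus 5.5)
The plan is to work on the surface $S$ of \S\ref{ssec:esse} and intersect the two components of the double point locus $\bbD(f)$ found there: the bitangent curve $\mathcal B$, with class \eqref{eq:bb}, and the flex curve $\mathcal D$, with class $(6d-6)H-3E_b-2E_n$. A flex bitangent of a member $C\in\calP$ is a line $L$ with a flex contact at some $p$ and a simple tangency at some $q\ne p$. In $S$ it gives the point $(L,p,C)$. This point lies on $\mathcal D$ because $L$ is a flex tangent at $p$. It also lies on $\mathcal B$, since it is the limit of points (bitangent, one of its contact points) as the curve moves in $\calP$; equivalently, $f$ identifies $(L,p,C)$ with $(L,q,C)$. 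So flex bitangents should be counted, with some multiplicity, by $\mathcal B\cdot\mathcal D$.

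\textbf{Step 1: the intersection number.} From the two classes, using $H^2=1$, $E_b^2=-d^2$, $E_n^2=-3(d-1)^2$ and $E_b\cdot E_n=0$, I get
\[
\mathcal B\cdot\mathcal D=(d-3)\bigl[(2d^2+5d-6)(6d-6)-3d^2(d+4)-6(d+2)(d-1)^2\bigr]=3(d-3)(d^3+2d^2-16d+8).
\]
The classes already encode how each curve passes through the exceptional divisors. Provided $\mathcal B$ and $\mathcal D$ meet no $E_b$ or $E_n$ at a common point, nothing at the base points or nodes needs a separate correction. I would check this by a local argument. Over a base point, $\mathcal D$ marks the three flex directions and $\mathcal B$ marks the $(d-3)(d+4)$ second-tangency directions, and for a general pencil these are disjoint. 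Over a node, $\mathcal D$ marks the two branch tangents and $\mathcal B$ marks the $d^2-d-6$ ramification lines of the $g^1_{d-2}$, which again are generically distinct.

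\textbf{Step 2: classify the points of $\mathcal B\cap\mathcal D$.} Away from the exceptional divisors, a point $(L,p,C)$ of $\mathcal B\cap\mathcal D$ has $L$ a flex tangent at $p$ that is a limit of bitangents with a contact point tending to $p$. There are two possibilities. Either the other contact point stays away from $p$, which gives a flex bitangent. Or both contact points merge into $p$, which forces contact order $4$, i.e.\ a hyperflex, and these are counted by $\deg\calH_{(4),d}=6(d-3)(3d-2)$ from Proposition \ref{prop:SC}. For a general pencil the remaining degenerate configurations (flex tritangents, flexes at singular points, and the like) are codimension $\ge 2$ phenomena and do not occur.

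\textbf{Step 3: local multiplicities, the main obstacle.} I need $\mathcal B$ and $\mathcal D$ to meet transversally at a flex bitangent point, and with multiplicity exactly $2$ at a hyperflex. For the first, I would use local coordinates on $S$ near $(L,p,C)$. There $\mathcal D$ is smooth, and $\mathcal B$ is locally the image of the smooth branch of the bitangent correspondence through $(L,q,C)$ transported by $f$; transversality should follow from the generality of $\calP$, in the same spirit as Lemma \ref{lem:branch}. For the second, I would look at the local model of a hyperflex: $y=x^4+tx^2+\cdots$ in a one-parameter family. There the bitangent locus is $\{t<0\}$-type, with contact points $\pm\sqrt{-t/2}$, and it is tangent to the flex locus. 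Equivalently, a hyperflex is a cusp of the branch curve (\S\ref{ssec:branch}), and there $\mathcal B$ should be tangent to $\mathcal D$, giving multiplicity $2$. As a consistency check, subtracting $2\cdot 6(d-3)(3d-2)=3(d-3)(12d-8)$ from the number in Step 1 gives
\[
3(d-3)(d^3+2d^2-28d+16)=3(d^2+6d-4)(d-3)(d-4),
\]
which is the claimed value. This also matches the vanishing expected for $d=3,4$. If the hyperflex multiplicity proved hard to justify directly, I would instead pin it down by computing $\deg\calH_{(3,2),5}$ independently, for instance by the degeneration of \S\ref{sec:recursion}, and use that value to fix the unknown constant.
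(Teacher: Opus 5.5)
Your proposal is correct and is essentially the paper's proof: intersect the classes of $\mathcal B$ and $\mathcal D$ on $S$, which gives $3(d-3)(d^3+2d^2-16d+8)$, then subtract $2\deg(\calH_{(4),d})$ for the hyperflexes to get $3(d^2+6d-4)(d-3)(d-4)$. Your added local checks make explicit what the paper only asserts: no common points on $E_b$ or $E_n$, transversality at flex bitangents, and tangency of $\mathcal B$ and $\mathcal D$ at a hyperflex, where in your model $y=x^4+tx^2$ the two curves are the parabolas $t=-2s^2$ and $t=-6s^2$. This tangency explanation of the multiplicity $2$ is also more precise than the paper's remark that hyperflexes are cusps of $\mathcal D$, since $\mathcal D$ is smooth on $S$ there and only its image in $(\bbP^2)^*$ is cuspidal.
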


\begin{proof}
Let $\calP$ be a general pencil of curves of degree $d$ and consider again the surface $S$ introduced in \S \ref {ssec:dbf}. Let us consider the intersection of the two curves $\mathcal B$ (the bitangent curve) and $\mathcal D$ (the flex curve) on $S$ introduced on p. \pageref {prop:bbt}. 
  
Among these intersections are the undulation (hyperflex) points
each counted with multiplicity two, because they are cusps of $\mathcal D$ (see \S \ref {ssec:branch}).
Hence the number of actual flex bitangents is be $\calB\cdot \mathcal D - 2\deg(\calH_{(4),d})$
where $\deg(\calH_{(4),d}) = 6(d-3)(3d-2)$ is the previously computed number of undulation points (see Proposition \ref {prop:SC}).

We make the computation on the surface $S$:
\begin{align*}
\mathcal B\cdot \mathcal D - 2\deg(\calH_{(4),d})&= 
(d-3)[(2d^2+5d-6) H - (d+4)E_b - (d+2)E_n] \\
&\;\;\; \cdot [(6d-6) H - 3E_b - 2E_n ] -2\deg(\calH_{(4),d})\\
&= (d-3)(2d^2+5d-6)(6d-6) - 3(d-3)(d+4)d^2 \\
&\;\;\; - 6(d-3)(d+2)(d-1)^2 - 12(d-3)(3d-2)\\
&=3d^4 - 3d^3 - 102d^2 + 300d - 144 \\
&= 3(d^2 + 6d - 4)(d - 3)(d - 4).
\end{align*}

\end{proof}

As noted in the Introduction, this result differs from the Cayley-Salmon formula claimed in \cite{Sal59}[\S 400], although no proper proof is given there. The formulas differ for $d=5$ already, and we have checked that our formula is correct (using SageMath) in this case, and also for $d=6$.  Separately, using a recursion method similar to the one used here in Section \ref{sec:recursion}, we also verified our formula for $d=5,6,7$, and therefore, despite the usual reliability of Cayley and Salmon in these matters, we are confident in the above result.

\section{Projections of the bitangent curve}\label{sec:comp}

Let, as usual, $\calP$ be a general pencil of curves of degree $d$. Recall the surface $S$ introduced in \S \ref {ssec:esse},
and the bitangent curve $\calB$ on $S$ (see  \S \ref {ssec:dbf}). The projection of $\calB$ via the first projection 
$p_1: S\longrightarrow (\bbP^2)^*$ is the curve
$\calM_{(2,2),d,\calP}$
 described by all bitangent lines to curves in the pencil $\calP$. The projection of $\calB$ via the second projection 
 $p_2: S\longrightarrow \bbP^2$ is the curve $\calB_{(2,2),d,\calP}$ described by the tangency points of the bitangent lines to the curves in $\calP$. 

We also have the third projection $\pi_3:S \to \calP$ mapping $\calB$ to the $\bbP^1$ which is the pencil $\mathcal{P}$ of the curves.
Let us analyze each of these projections in this section.

\subsection{The third projection}\label{ssec:gen}
Let $\tilde {\mathcal B}$ be the normalization of $\mathcal B$, which is a smooth curve whose (arithmetic) genus
is the geometric genus of $\mathcal B$. 

\begin{proposition}\label{prop:pg} One has
$$
p_g(\calB)=p_a(\tilde {\calB})
=8d^4 -13d^3 - 195d^2 + 582d - 287.
$$
\end{proposition}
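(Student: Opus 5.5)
The plan is to compute the genus of $\tilde{\calB}$ by the Riemann--Hurwitz formula applied to the composite
\[
\tilde\pi:\tilde{\calB}\longrightarrow \calB \xrightarrow{\ \pi_3\ } \calP\cong\bbP^1 .
\]
Its degree is the number of points of $\calB$ over a general member $C$ of the pencil. This is twice the Pl\"ucker number of bitangents, namely $d(d-2)(d^2-9)$. As a consistency check, this also equals $\calB\cdot f^*(q)$, computed from \eqref{eq:bb} and Lemma \ref{lem:ident}(b). Then
\[
2p_g(\calB)-2=-2\,d(d-2)(d^2-9)+R,
\]
and the whole work consists in computing the ramification contribution $R$. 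The candidate contributions come from the finitely many special members of $\calP$. These are the curves with a hyperflex ($\deg\calH_{(4),d}$ of them, Proposition \ref{prop:SC}), the curves with a flex bitangent ($\deg\calH_{(3,2),d}$ of them, Proposition \ref{prop:fb}), the curves with a tritangent ($\deg\calH_{(2,2,2),d}$ of them, Corollary \ref{cor:trit}), and the $\delta=3(d-1)^2$ nodal members. Finally, I would check that the points of $\calB$ on $E_b$ are not ramified: there a general curve of the pencil is tangent to a line at a base point, which is an open condition on the tangency and imposes nothing special.

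I would analyze each type locally.

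\emph{Tritangents.} The three bitangents determined by pairs of the three contact points all coincide with the same line $L$. On $\calB$ this produces nodes, made of transverse branches, each mapping isomorphically to $\calP$. These contribute nothing to $R$, only to the difference $p_a-p_g$.

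\emph{Flex bitangents.} Near a flex bitangent at a flex $p$ with extra tangency at $q$, two bitangents of nearby curves coalesce: lines tangent near $q$ and tangent on either side of the flex. So $\tilde\pi$ has simple ramification both at the point of $\tilde\calB$ over $(L,p)$ and at the point over $(L,q)$. This gives a contribution of $2\deg\calH_{(3,2),d}$.

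\emph{Hyperflexes.} Here the bitangent degenerates with both contact points tending to the hyperflex $p$. I would write the local equation $y=x^4+t\,a(x)+\dots$, with $t$ the pencil parameter, and compute the local branches of $\calB$ over $t=0$. I expect $\calB$ to have a singular point there, consistent with $\calB\cdot\mathcal D$ picking up multiplicity $2$ at the cusps of $\mathcal D$ in the proof of Proposition \ref{prop:fb}. I expect a ramification of total order $1$ or $3$ on $\tilde\calB$, to be read off from a Puiseux expansion in $t$.

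\emph{Nodal members.} By Proposition \ref{prop:numu}, each improper bitangent through the node and tangent at a smooth point $q$ is the limit of two bitangents. So over $(L,q)$ the map is simply ramified. Over the node, the corresponding points lie on $E_n$, and I must decide whether the two limiting contact points there are distinct on $E_n$ (in which case there is no ramification) or coincide. The number of such improper bitangents per nodal curve is the number of tangents from the node, $d(d-1)-6$ in general position. The local model $xy=t$ together with a tangent line from the node should settle the behaviour on $E_n$.

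Summing these contributions gives $R$ as a polynomial in $d$, and I would check that the resulting $p_g$ matches the claimed $8d^4-13d^3-195d^2+582d-287$. As an independent check I would use $p_a(\calB)$ from Proposition \ref{prop:bbt}: the difference $p_a-p_g$ should equal the $\delta$-invariants of the singular points of $\calB$. These include $3\deg\calH_{(2,2,2),d}$ nodes from tritangents, the nodes of $\calB$ at the two points over each flex bitangent, and the contributions at hyperflexes and on $E_n$.

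The main obstacle is the local analysis at hyperflexes and at the nodal fibres, that is, determining the precise ramification indices and the number of branches there. I would do these in explicit local coordinates first, then cross-check with the genus computation via $p_a$ before committing to the final formula.
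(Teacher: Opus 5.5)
Your strategy is the paper's: Riemann--Hurwitz for $\tilde\calB\to\calP$, of degree $d(d-2)(d^2-9)$. Ramification occurs only over hyperflexes, nodal members and flex bitangents, and not over tritangents or along $E_b$. As written it is not yet a proof, because the two local computations you postpone are the whole content. They resolve as follows.

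\emph{Hyperflexes.} The model $y=x^4+tx^2$ gives contact function $(x-s)^2(x^2+2sx+3s^2+t)$. Near $(L,p,C)$ this yields $\calB=\{t=-2s^2\}$ and $\mathcal D=\{t=-6s^2\}$. So $\calB$ is \emph{smooth} there and simply tangent to the fibre, and the contribution is $1$, not $3$. The multiplicity $2$ in $\calB\cdot\mathcal D$ is a tangency, not a singularity of $\calB$.

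\emph{Nodal members.} You need not decide whether limits on $E_n$ coincide. The fibre of $\pi_3$ is $\tilde C+2E_x$, since locally $t=uv$ becomes $t=au^2$ on the blow-up. By \eqref{eq:bb}, $\calB$ meets $E_x$ transversally in $(d-3)(d+2)$ points. Hence each $(L,x,C)$ is automatically a simple ramification point, and together with $(L,q,C)$ this gives $2$ per improper bitangent.

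The real problem is your final step, which cannot succeed. With the values above, the stated formula holds if and only if each flex bitangent contributes $4$. The stated $p_g$ corresponds to $R=18d^4-30d^3-408d^2+1200d-576$; the $R(\pi)$ displayed in the paper is a misprint. The paper obtains $4$ by asserting that $(L,p,C)$ and $(L,q,C)$ each contribute two.

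Your value $2$ is incompatible with the target, but it is the correct one. Only two bitangents of nearby members tend to a flex bitangent: dually, a cusp meets a transverse smooth branch with multiplicity $2$, and Pl\"ucker shows such a curve has two fewer proper bitangents. So $\pi_3|_{\calB}$ has local degree $2$ at each of $(L,p,C)$ and $(L,q,C)$, and a point of local degree $2$ contributes at most $1$.

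Concretely, at $(L,p,C)$ the curve $\calB$ is smooth with $3s^2+\gamma t+\cdots=0$, where $\gamma\neq0$ is transversality of $\calP$ to $\calH_{(3,2),d}$. At $(L,q,C)$ the two nearby contact points sit at $\rho=\kappa t\pm c\,t^{3/2}+\cdots$ with $c\neq0$. So $\calB$ has an ordinary \emph{cusp} there, not the nodes you predict, and is simply ramified on $\tilde\calB$.

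Completing your plan therefore gives $p_g(\calB)=5d^4-10d^3-93d^2+282d-143$. This is the stated value minus $\deg\calH_{(3,2),d}$: the two agree for $d=4$, but for $d=5$ they give $817$ versus $1123$. These $\deg\calH_{(3,2),d}$ cusps also explain why the cross-check still works. One has $p_a(\calB)-p_g(\calB)=3\deg\calH_{(2,2,2),d}+\deg\calH_{(3,2),d}$, which recovers the same tritangent number, so the computation in \S\ref{ssec:sing} cannot detect the discrepancy. Your method is sound, but it proves a corrected formula, not the one stated.
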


\begin{proof}
The projection to $\calP$ induces a morphism 
$\pi:\tilde {\calB} \to \bbP^1$.
Each member $C$ of the pencil has $d(d-2)(d^2-9)/2$ bitangents,
and so since each bitangent line $L$ occurs twice in $\calB$
(with the second coordinate being the two tangent points),
we see that the degree of the map is $d(d-2)(d^2-9)$.
This gives a linear series of dimension one and degree $d(d-2)(d^2-9)$ on $\tilde {\calB}$;
let us compute the number of branch points for the map.

First, each line of undulation (where $\mult_p(L,C) \geq 4$) is a branch point and the number of these is
\[
6(d-3)(3d-2)
\]
(see Proposition \ref {prop:SC}).

Second, let us consider branching over a nodal member $C$ of $\calP$.
Each line $L$ passing through the node $p$, 
and tangent to the curve at another point $q$,
counts as a bitangent point $(L,p,C)$ and $(L,q,C)$, 
both with multiplicity two,
and therefore both give branch points for the map 
(see \S \ref {ssec:dm} 
and specifically Proposition \ref {prop:numu}).
The number of these tangent lines at the node $p$ is given by Hurwitz,
since these lines cut out a $g^1_{d-2}$ on the nodal curve,
which has genus $(d-1)(d-2)/2 - 1$:
this number is $d^2-d-6$.
So the total number of these branch points is therefore
\[
2(3(d-1)^2) (d^2-d-6)= 6d^4 - 18d^3 - 18d^2 + 66d - 36.
\]

The third and final contribution to branching for the map is given by the flex bitangents,
whose number is
\[
3d^4 - 3d^3 - 102d^2 + 300d - 144
\]
(see Proposition \ref {prop:fb}).  
We claim that each of them  contributes $4$ to the ramification of $\pi$.
If $p$ is the flex point and $q$ is the tangent point for a line $L$ to a member $C$,
then both $(L,p,C)$ and $(L,q,C)$ are double branch points of the map to $\calP$,
each contributing two to the ramification count; hence the total contribution is
\[
4(3d^4 - 3d^3 - 102d^2 + 300d - 144).
\]
Thus the total amount of ramification is
\begin{align*}
R(\pi) &= [6(d-3)(3d-2)]+[6d^4 - 18d^3 - 18d^2 + 66d - 36] \\
&\;\;\; +[4(3d^4 - 3d^3 - 102d^2 + 300d - 144)] \\
&= 9d^4 - 21d^3 - 102d^2 + 300d - 144
\end{align*}
From this, by Hurwitz formula, we compute the genus of $\tilde {\calB}$, i.e., the geometric genus of $B$:
\begin{align*}
p_g(B) &= [-2(\deg(\pi_3))+R(\pi_3)+2]/2 \\
&= [(-2)(d)(d-2)(d^2-9) + (9d^4 - 21d^3 - 102d^2 + 300d - 144)+2]/2 \\
&= 8d^4 -13d^3 - 195d^2 + 582d - 287.
\end{align*}
\end{proof}

\subsection{The singularities of the bitangent curve}\label{ssec:sing}
The singularities of $\calB$ come from tritangent lines to members $C$ of the pencil $\mathcal{P}$.
Indeed, each tritangent line $L$ gives three members of $\calB$,
and each is a double point of $\calB$. 
The number of tritangent lines to curves in $\calP$ is $\deg(\calH_{(2,2,2),d})$. So we have
\begin{align*}
3\deg(\calH_{(2,2,2),d}) &= p_a(\mathcal B) - p_g(\mathcal B) \\
&= [3d^5 - 19d^4 + 14d^3 + 120d^2 - 240d + 73] \\
&\;\;\; - [8d^4 -13d^3 - 195d^2 + 582d - 287] \\
&= 3d^5 - 27d^4 +27d^3 +315d^2 -822d +360
\end{align*}
hence 
\[
\deg(\calH_{(2,2,2),d})= (d^2+3d-2)(d-3)(d-4)(d-5),
\]
and we find again the result in Corollary \ref {cor:count}.

\subsection{The second projection} 
The  projection of $\calB$ via the second projection 
$p_2: S\longrightarrow \bbP^2$ is the curve $\calB_{(2,2),d,\calP}$
described by the tangency points of the bitangent lines to the curves in $\calP$. 
Next, using this projection,  we can describe the singularities of $\calB_{(2,2),d,\calP}$.

Given an element $(L,p,C) \in \calB$,
and a general point $p$,
there is a unique curve in the pencil $\calP$ through $p$,
and a unique tangent line $L$ there,
and so the map $\pi_2:\calB\to \calB_{(2,2),d,\calP}$ is birational.
The map fails to be injective
only in two cases.

First, the pre-image of each of the $d^2$ base points $p$ 
consists of $(d-3)(d+4)$ points of $\calB$:
this computation we made above, as the number of elements of the pencil
whose tangent line at a base point is tangent at a second point.
(This is the coefficient of $E_b$ in the class of $\calB$ on $S$).
This gives a singularity of $B_{(2,2),d,\calP}$ which is resolved on $\calB$.

Second, the pre-image of each of the $3(d-1)^2$ nodes
of the singular members of the pencil $\mathcal{P}$
consists of $(d-3)(d+2)$ distinct points of $B$:
these correspond to the lines through the node
which are tangent elsewhere to the curve.
This gives a singularity of $B_{(2,2),d,\calP}$ which is resolved on $\mathcal B$.

We note that if $L$ is a tritangent line to a curve $C$
then on $\calB$ we have three elements $(L,p,C)$,
and at each of these $\calB$ has an ordinary node.
This node is reproduced on $B_{(2,2),d,\calP}$,
and so there are three nodes on $B_{(2,2),d,\calP}$ for each tritangent line
(occuring at the three points of tangency).
These are the only additional singularities of $B_{(2,2),d,\calP}$.

Since the map is birational, the two curves have the same geometric genus, namely
\[
p_g(\calB) = p_g(B_{(2,2),d,\calP}) = 8d^4 -13d^3 - 195d^2 + 582d - 287
\]
(see Proposition \ref {prop:pg}).

\subsection{The first projection}
As above consider $\calM_{(2,2),d,\calP}$
the image of $\calB$ under the first projection to the dual plane. 
We have seen (see Corollary \ref {cor:count}) that
\[
\deg(\calM_{(2,2),d,\calP}) = 2d(d-2)(d-3)
\]
and the map from the normalization $\tilde {\calB}$ to the normalization ${\tilde \calM_{(2,2),d,\calP}}$ is a double cover.
This map is branched only at the lines of undulation.
By Hurwitz applied to the double cover 
$\tilde {\calB}\longrightarrow {\tilde \calM_{(2,2),d,\calP}}$, 
we have  that
\[
2p_g(\calB)-2 = 2(2p_g(\calM_{(2,2),d,\calP})-2 + \deg (\calH_{(4),d})
\]
and we know $p_g(\calB)$ from above.  
We conclude:
\begin{proposition}\label{prop:gen} One has 
$$
p_g(\calM_{(2,2),d,\calP})=4d^4 - (13/2)d^3 - 102d^2 + (615/2)d - 152.
$$
\end{proposition}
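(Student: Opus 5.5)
The plan is to apply the Riemann--Hurwitz formula to the double cover $\tilde{\calB}\to\tilde\calM_{(2,2),d,\calP}$, as indicated just before the statement, and then solve for $p_g(\calM_{(2,2),d,\calP})$. Three things must be established: that the map is a genuine degree $2$ map between the normalizations; that its branch points are exactly the hyperflex (undulation) configurations in the pencil; and that each of these is a simple branch point.

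\emph{The double cover and its involution.} The map $\calB\to\calM_{(2,2),d,\calP}$, $(L,p,C)\mapsto L$, has degree $2$. For a general bitangent $L$ of a general $C\in\calP$ the fibre is $\{(L,p,C),(L,q,C)\}$, where $p,q$ are the two tangency points. So $\calB$ carries the birational involution $\tau$ exchanging $p$ and $q$. Since $\tilde{\calB}$ is a smooth curve, $\tau$ extends to a regular involution of $\tilde{\calB}$. The quotient $\tilde{\calB}/\tau$ is birational to $\calM_{(2,2),d,\calP}$, hence it is its normalization. Ramification points of the double cover are then the fixed points of $\tau$, each counted with multiplicity one.

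\emph{Locating the fixed points.} A fixed point is a branch of $\calB$ at which the two tangency points come together, i.e.\ $p=q$, so $L$ has contact $\geq 4$ with $C$ at $p$. By \S\ref{ssec:branch} and Proposition~\ref{prop:SC} there are exactly $\deg(\calH_{(4),d})=6(d-3)(3d-2)$ such configurations. I would then check that none of the other special points of $\calB$ are fixed:
\begin{itemize}
\item the lines through a node of a singular member, tangent elsewhere, have $p\neq q$;
\item the flex bitangents (Proposition~\ref{prop:fb}) also have $p\neq q$;
\item the nodes of $\calB$ coming from tritangents (\S\ref{ssec:sing}) have three distinct tangency points, so $\tau$ maps each branch to a branch over a different point.
\end{itemize}

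\emph{The main obstacle.} The delicate step is the local analysis at a hyperflex. One must show that $\calB$ has a single branch there and that $\tau$ fixes it with simple ramification. I would first use a local model: the pencil parameter $s$ and the line $L$ near the hyperflex line, with $C_s|_L$ locally $(t^2-\epsilon)^2$ up to units. Nearby bitangents correspond to the factorization with roots $t=\pm\sqrt{\epsilon}$, and $\epsilon$ is a local parameter on $\calM_{(2,2),d,\calP}$. So the branch of $\tilde{\calB}$ is parametrized by $u=\sqrt{\epsilon}$, and $\tau$ acts by $u\mapsto -u$. This gives exactly one point of $\tilde{\calB}$ over each hyperflex, with ramification index $2$. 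This is consistent with the hyperflexes being simple branch points of the map to $\calP$ in Proposition~\ref{prop:pg}, and with their being cusps of $\mathcal D$ (\S\ref{ssec:branch}).

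\emph{Hurwitz computation.} Granting the above, Riemann--Hurwitz gives
$$
2p_g(\calB)-2=2\bigl(2p_g(\calM_{(2,2),d,\calP})-2\bigr)+6(d-3)(3d-2).
$$
Substituting $p_g(\calB)=8d^4-13d^3-195d^2+582d-287$ from Proposition~\ref{prop:pg}, we get $2p_g(\calB)-2-(18d^2-66d+36)=16d^4-26d^3-408d^2+1230d-612$. Dividing by $2$ gives $2p_g(\calM_{(2,2),d,\calP})-2=8d^4-13d^3-204d^2+615d-306$. Solving yields the stated value $4d^4-\tfrac{13}{2}d^3-102d^2+\tfrac{615}{2}d-152$.

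As a sanity check, this is an integer for every $d$: $13d^3+615d=13d(d^2-1)+628d$ is even. For $d=3$ it gives $0$, consistent with $\calM_{(2,2),3,\calP}$ being empty; for $d=4$ it gives $p_g=-1$, which would indicate that $\calM_{(2,2),4,\calP}$ is reducible.
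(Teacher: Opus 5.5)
Your reduction is the paper's own: Riemann--Hurwitz for the double cover $\tilde\calB\to\tilde\calM_{(2,2),d,\calP}$, simply branched exactly over the $\deg(\calH_{(4),d})$ hyperflex lines, with $p_g(\calB)$ imported from Proposition~\ref{prop:pg}. Your justification of the branching is more careful than the paper's one-line assertion, and your algebra is right. Your closing sanity checks are miscomputed, however. The formula gives $1$ at $d=3$ and $54$ at $d=4$, not $0$ and $-1$, so the suggestion that $\calM_{(2,2),4,\calP}$ is reducible has no basis and should be dropped.

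More seriously, the result is only as good as the imported value of $p_g(\calB)$, which looks overstated. In Proposition~\ref{prop:pg} each flex bitangent is credited with ramification $4$ for $\tilde\calB\to\calP$. But only two bitangents of nearby members tend to a flex bitangent $L$ of $C$: at $L$ the dual curve of $C$ has a cusp plus a smooth branch transverse to the cuspidal tangent, of $\delta$-invariant $3$, and this deforms to one cusp and two nodes. Hence each of the two points of $\tilde\calB$ over $L$ has ramification index $2$, and the total contribution is $2$, not $4$.

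What compensates is a singularity of $\calB$ that the paper does not list. The two bitangents of $C_s$ near $L$ touch near the flex at parameters $\pm t$ with $s\sim t^2$, so they agree to first order and differ only at order $t^3$. Thus $\calM_{(2,2),d,\calP}$ has an ordinary cusp at $L$. Since $\pi_1'$ is \'etale at the simple tangency point $(L,q,C)$, $\calB$ has an ordinary cusp there too.

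The two errors cancel in the tritangent count of \S\ref{ssec:sing}, but not in $p_g(\calB)$, which would drop by $\deg(\calH_{(3,2),d})$. Through your (correct) Hurwitz relation, the stated value would then drop by $\frac12\deg(\calH_{(3,2),d})$, to $\frac52 d^4-5d^3-51d^2+\frac{315}{2}d-80$.
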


\begin{proof} One has
\begin{align*}
p_g(\calM_{(2,2),d,\calP}) 
&= [2p_g(\calB)-2 - \deg (\calH_{(4),d}) + 4]/4 \\
&= [2(8d^4 -13d^3 - 195d^2 + 582d - 287) - (6(d-3)(3d-2)) +2 ]/4 \\
&= 4d^4 - (13/2)d^3 - 102d^2 + (615/2)d - 152
\end{align*}
as claimed. 
\end{proof}

\begin{remark}\label{rem:fin} 
We now remark that $\calM_{(2,2),d,\calP}$ is a plane curve with $\deg(\calH_{(2,2,2),d})$ triple points,
corresponding to the tritangent lines to curves in the pencil $\calP$.
Hence if these are the only singularities of $\calM_{(2,2),d,\calP}$,
the geometric genus would be the arithmetic genus minus $3\deg(\calH_{(2,2,2),d})$.
However (computing arithmetic genus from the degree) we see that
\begin{align*}
&p_a(\calM_{(2,2),d,\calP}) - 3\deg(\calH_{(2,2,2),d}) - p_g(\calM_{(2,2),d,\calP})\\
 &= 
(\deg(\calM_{(2,2),d,\calP}))-1)(\deg(\calM_{(2,2),d,\calP})-2)/2 \\
&\text{ }\;\; - 3\deg(\calH_{(2,2,2),d}) - p_g(\calM_{(2,2),d,\calP}) \\
&= 2d^6 - 23d^5 + 97d^4 - (287/2)d^3 - 126d^2 + (993/2)d - 207
\end{align*}
which is not zero.
This shows that $\calM_{(2,2),d,\calP}$ has additional singularities besides the triple points
coming from the tritangents.

We conjecture that these singularities arise from lines that are bitangent to more than one curve in the pencil, and should give ordinary nodes to $\calM_{(2,2),d,\calP}$, 
the number of which
should be the formula above.
For example, for $d=3$, this number is $12$:
are there $12$ common flex lines to members of a general pencil of cubics?
\end{remark}

\end{document}